\NeedsTeXFormat{LaTeX2e}
\documentclass[reqno]{article}
\usepackage{amssymb, amsmath, amsthm, amsfonts, amscd}
\usepackage[english]{babel}

\usepackage{graphicx}

\usepackage{epstopdf}

\usepackage{breqn}
\usepackage{mathtools}
\usepackage{color}
\usepackage{hyperref}
\hypersetup{
    colorlinks=true,
    citecolor=red,
    filecolor=black,
    linkcolor=blue,
    urlcolor=black
}

\chardef\bslash=`\\ % p. 424, TeXbook
%    Normalized (nonbold, nonitalic) tt font, to avoid font
%    substitution warning messages if tt is used inside section
%    headings and other places where odd font combinations might
%    result.

%    command name

%    LaTeX package name

%    File name

%    environment name

\hfuzz1pc % Don't bother to report overfull boxes if overage is < 1pc

\newtheorem{theorem}{Theorem}[section]
\newtheorem{corollary}[theorem]{Corollary}
\newtheorem{lemma}[theorem]{Lemma}
\newtheorem{proposition}[theorem]{Proposition}
\newtheorem{definitionn}{Definition}[section]

\newtheorem{assumption}[theorem]{Assumption}

\newcommand{\N}{\mathbb{N}}

\newcommand{\HH}{\mathrm{H}}

\newcommand{\DD}{\mathcal{D}}

\newcommand{\ZZ}{\mathcal{Z}}

\newcommand{\Z}{\mathbb{Z}}
\newcommand{\Q}{\mathbb{Q}}
\newcommand{\R}{\mathbb{R}}
\newcommand{\C}{\mathbb{C}}

\newcommand{\T}{\mathbb{T}}
\newcommand{\Td}{\mathbb{T}^{d}}
\newcommand{\Rd}{\mathbb{R}^{d}}
\newcommand{\Zd}{\mathbb{Z}^{d}}
\newcommand{\TT}{\mathcal{T}}

\newcommand{\ft}{\mathfrak{t}}

\newcommand{\Id}{\mathrm{Id}}
\newcommand{\Conj}{\mathrm{Conj}}

\newcommand{\sm}{C^{\infty}}
\newcommand{\0}{\circ}

\def\a{\alpha }
\def\g{\gamma }
\def\sm{C^{\infty} }

\def\l{\lambda }
\def\L{\Lambda }

\def\r{\rho}
\def\s{\sigma}
\def\t{\tau}
\def\D{\Delta}

\def\w{\omega}

\def\.{\cdot }
\def\ra{\rightarrow}

\def\begeq{\begin{equation*}}
\def\endeq{\end{equation*}}

%%%%%%%%%%%%%%%%%%%%%%%%%%%%%%%%%%%%%%%%%%%%%%
%\newcommand{\p}{\partial}
%\newcommand{\pd}[2]{\frac {\p #1}{\p #2}}
%\newcommand{\ds}{\displaystyle}
%\newcommand{\eqnref}[1]{(\ref {#1})}
%\newcommand{\pf}{\medskip \noindent {\sl Proof}. \ }
%\renewcommand{\qed}{\hfill $\Box$ \medskip}
%%%%%%%%%%%%%%%%%%%%%%%%%%%%%%%%%%%%%%%%%%%%%%%%%%%%%%%%%%

%\numberwithin{equation}{section}
%\numberwithin{figure}{section}

%%%%%%%%%%%%%%%%%%%%%%%%%%%%%%%%%%%%%%%%%%%%%%%%%%%%%%%%%%%
\title{
\textsc{\textbf{Fibered rotation vector and hypoellipticity for quasiperiodic
cocycles in compact Lie groups}}\\
\author{N. Karaliolios}
%\\
\author{Nikolaos Karaliolios \footnote{Universit\'{e} de Lille. Email: nikolaos.karaliolios@univ-lille.fr}}
}

%%%---------------------------------------------------------------------------------------
\begin{document}
%%%---------------------------------------------------------------------------------------

\maketitle

\begin{abstract}
Using weak solutions to the conjugation equation, we define a fibered rotation
vector for almost reducible quasi-periodic cocycles in $\mathbb{T}^{d} \times G$, $G$ a
compact Lie group, over a Diophantine rotation. We then prove that if this rotation
vector is Diophantine with respect to the rotation in $\mathbb{T}^{d}$, the cocycle is
smoothly reducible, thus establishing a hypoellipticity property in the spirit of the
Greenfield-Wallach conjecture in PDEs.

\textit{Keywords}: Greenfield-Wallach conjecture, Hypoelliptic vector fields, KAM theory,
Compact Lie groups, Cohomological equations.

\textit{AMS Classification}: 37C55, 35H10
\end{abstract}  

\tableofcontents

\section{Introduction} \label{secintro}

In this note, we are interested in the regularity of the solutions $\HH$ to the conjugation
equation for dynamical systems,
\begin{equation} \label{eqconj}
\HH \0 \Phi _{1}= \Phi _{2} \0 \HH  
\end{equation}
where the $\Phi _{i}$ are two $\sm$ smooth diffeomoprhisms of a compact manifold
$\mathcal{M}$ and $\HH$ can be a smooth or finitely differentiable diffeomorphism, a
homeomorphism, or a measurable self-mapping well defined and invertible $a.e.$. We are
interested in particular in definining weak solutions $\HH $ in $\DD '$ and then
imposing a condition on, say, $\Phi _{2}$ which guarantees $\sm$ smoothness for $\HH$.

In our case, the manifold $\mathcal{M}$ will be the product space $\Td \times G$, with
$\Td = \Rd / \Zd$ and $G$ a compact Lie group. We will not consider the full space of
diffeomorphisms of this space, but work in the space of \textit{smooth quasi-periodic
cocycles}, denoted by $SW^{\infty} = SW^{\infty}(\Td ,G)$. A cocycle
$\Phi = (\a , A(\. )) \in SW^{\infty}$, where $A(\. ) \in \sm (\Td ,G) $, acts on
$\Td \times G$ by
\begeq
\Phi .(x,S) \mapsto (x+\a , A(x).S)
\endeq
The space $SW^{\infty}$ has the natural product topology, and is thus a Fr\'{e}chet space.
The dynamics of a cocycle fiber over $\Td$. We define the projection $\pi$ of the cocycle
on its \textit{frequency}, $\pi (\Phi ) = \a$, and introduce the notation
\begeq
SW_{\a}^{\infty}(\Td ,G) = \pi ^{-1}(\{ \a \})
\endeq
As the rotation $\a$ will be fixed throughout the article, we take the liberty to make
no distinction between the mapping $A(\. ) \in \sm (\Td ,G)$ and the cocycle over $\a$ that
it defines.

The relevant notion of conjugation in $SW^{\infty}$ is \textit{fibered conjugation},
via the action of the subgroup $SW^{\infty}_{0}(\Td ,G)$. The resulting conjugation
equation, since conjugation does not act on frequences, is
\begeq
A_{1}(\. ) = H(\. + \a).A_{2}(\.) H^{*}(\. )
\endeq
where we suppose that the cocycle $\HH = (0,H(\. ))$ conjugates the cocycles
$\Phi _{j}= (\a , A_{j}(\. )) $, and the unknown is the mapping $H(\. )$.
We will use the notation $\Phi _{1} = \Conj _{H(\. )}\Phi _{2} = \Conj _{\HH}\Phi _{2}$.
In the particular case where $\Phi _{2}$ is a \textit{constant cocycle}, i.e. where
$A_{2}(\. ) \equiv A_{2} \in G$ is a constant mapping, we say that $\Phi _{1}$ is
\textit{reducible}.

In our previous paper \cite{NKRigidity} we showed that a cocycle in an open subset of
$SW_{\a}^{\infty}$ with $\a$ Diophantine, is $\sm$ reducible provided that it is measurably
conjugate to a constant cocycle $(\a , A_{\infty})$, $A_{\infty} \in G$, whose eigenvalues
$a_{\infty}$ satisfy a Diophantine condition with respect to $\a$.
Our subsequent work, \cite{NKInvDist} and \cite{NKContSpec}, shows that the Diophantine
condition on $A_{\infty}$ is optimal for this rigidity property.

In this note, we rid the main theorem of \cite{NKRigidity} of the ad-hoc hypothesis
of the existence a measurable conjugation, by showing that any cocycle in that open
subset can be assigned a fibered rotation vector.
Subsequently, we show that any cocycle is reducible to a constant one with the same
rotation vector, but by a transfer function with negative and finite Sobolev regularity
(depending only on the dimension of the phase-space).
We then use the hypoellipticity property implied by the same Diophantine hypothesis as in
\cite{NKRigidity} in order to obtain a $\sm$ smooth conjugation.

The hypothesis of the existence of a measurable conjugation is, thus, totally
unnatural, since for general reasons a distributional solution to the conjugation
equation exists and it is sufficient in order to obtain the existence of a $\sm$ smooth
solution. We remark that, by \cite{NKPhD},
when $\a \in RCD \subset \T ^{1}$, almost reducible cocycles form an open dense set in the
space of cocycles\footnote{The group $G$ needs to be semisimple for that to hold.}, and that cocycles that are not almost reducible do not have a fibered
rotation vector, because arbitrarily small perturbations of such cocycles admit the
skew-shift mapping of $\T ^{2}$
\begin{equation} \label{eqskew shift}
(x,y) \mapsto (x+\a , y+x)
\end{equation}
as a factor.\footnote{Actually, use of the K.A.M. normal form and of the sequence of
conjugations $H_{i}(\. )$, defined in eq. \eqref{eqseq conj}, in \S 10 of \cite{NKPhD} can
show that such cocycles themselves admit the skew-shift as a factor. This technical issue
is beyond the scope of the present note.}

The results that we prove herein admit their continuous-time counter parts, where
the corresponding objects are vector fields
\begeq
\begin{pmatrix}
\w \\
F(\. )
\end{pmatrix}
\in \Td \times g , F(\. ) : \Td \ra g
\endeq
with $g$ a compact Lie algebra, which places the results directly in the context of
hypoelliptic first order differential operators and the Greenfield-Wallach conjecture
(problem 2 in \cite{GrWal73}).

\section{Notation}

In order to keep this note short, we will follow the notation of our previous works that
are cited herein, and whenever some deviaton from that notation is needed, it will be
made explicit.

\subsection{Lie groups}

We denote by $G$ a compact Lie group, by $g$ its Lie algebra, and by $d'$ its dimension.
We fix $\TT$, a maximal torus of $G$ and denote by $\ft $ the Lie albebra of $\TT$.
We will denote by $\ZZ$ the lattice of preimages of the center of $G$ in $\ft$.

The adjoint action of $\TT $ on $g $ admits an eigenspace decomposition, known as
root space decomposition (cf. \cite{DieudElV}; we will follow the notation we used in
\cite{NKPhD} and \cite{NKRigidity}). There exists a finite set $\D \subset \ft ^{*}$ of
non-zero $\R$-valued linear forms on $\ft$ and vectors $j_{\r} \in \ft ^{\perp}$, for
$\r \in \D$ for which the following holds.

If $A \in \TT $ and $a \in \ft$ is such that $A = \exp (a)$, then
\begeq
Ad(A).j_{\r} = 2i\pi \r(a) j_{\r}, \forall \r \in \D
\endeq
Moreover, there exists a vector $e _{\r} \in \ft$ such that
\begeq
\R e_{\r} \oplus \C j_{\r} \equiv \R \times \C \approx (\R ^{3} , \. , \times )
\approx (su(2))_{\r}
\endeq
is naturally isomorphic to $\R ^{3}$ equipped with its scalar and vector product, and the
Lie bracket of $g$ restricted to $\R e_{\r} \oplus \C j_{\r}$ is mapped to the vector
product of $\R ^{3}$. This Lie algebra is naturally isomorphic to $su(2)$, the Lie algebra
of $SU(2)$, the group of special $2\times 2$ unitary matrices.

The vectors $e _{\r}$ spam $\ft$, and $A \in Z _{G}$, the centre of $G$, iff
\begeq
\exp (2i\pi \r (a) )  = 1 , \forall \r \in \D
\endeq
This condition defines a lattice $\ZZ \subset \ft $ which is fixed along with $\ft$.

\subsection{Functional analysis}

We will denote by $\| \. \| _{s}$ the norm of the Sobolev space
$H^{s} = H^{s} (\Td ,G)$. The space of
distributions will be denoted by
\begeq
\DD ' = \DD ' (\Td ,G) = \cup _{s \in \R}H^{s} (\Td ,G)
\endeq

The following definition is from \cite{GrWal73}.

\begin{definitionn}
A differential operator $P$ is called Globally Hypoelliptic if
\begeq
P.v = f
\endeq
with $v \in \DD '$ and $f \in \sm$ implies that $v \in \sm$.
\end{definitionn}

In our context, the differential operator is replaced by the conjugation operator, the
operator that defines the conjugation equation, eq. \eqref{eqconj}. This operator
behaves like a differential operator, and its equivalent in the continuous time case,
i.e. in the study of vector fields, is actually the Lie derivative with respect to a
vector field.

\subsection{Arithmetics}

\begin{definitionn} \label{defDC}
We will denote by $DC(\g ,\t )$ the set of numbers $\a$ in $\T ^{d} $
such that for any $k\in \Z ^{d} \setminus \{ 0 \}$,
\begin{equation*}
|\a \. k| _{\Z }\geq \frac{\gamma ^{-1}}{|k|^{\tau }}
\end{equation*}
Numbers satisfying such a condition are called Diophantine.
\end{definitionn}
The set $DC(\gamma ,\tau )$, for $\tau >d+1$ fixed and $\gamma \in \R _{+} ^{\ast}$ is
of positive Haar measure in $\T ^{d}$. If we fix $\tau$ and let $\gamma$ run through
the positive real numbers, we obtain $CD(\t ) =\cup_{\gamma >0} DC(\gamma ,\tau )$
which is of full Lebesgue measure. The numbers that do not satisfy any Diophantine
condition are called \textit{Liouvillean}. They form a residual set of $0$
Lebesgue measure.

We now turn to definitions concerning arithmetics relative to a rotation $\a$.
\begin{definitionn} \label{defrelDC}
We will denote by $DC_{\a }(\tilde{\g },\tilde{\t} )$ the set of numbers $\beta$ in $\T $
such that for any $k\in \Z ^{d} \setminus \{ 0 \}$,
\begin{equation*}
|\beta - \a \. k| _{\Z }\geq \frac{\tilde{\g } ^{-1}}{|k|^{\tilde{\t } }}
\end{equation*}
Such numbers are called Diophantine with respect to $\a$.
\end{definitionn}
In particular, $\a \in DC$ iff $0 \in DC_{\a}$.
\begin{definitionn} \label{defres}
We will denote by $Res_{\a }$ the set of numbers $\beta$ in $\R $
for which there exists $k_{r }\in \Zd \setminus \{ 0 \}$ such that
\begeq
\beta = k_{r} \. \a \mod \Z
\endeq
Such numbers are called Resonant with respect to $\a$.
\end{definitionn}
We end this block of definitions with some definitions concerning arithmetics of
elements of $G$.
\begin{definitionn} \label{defDC group}
We will denote by $DC_{\a}$ the set of elements $S \in G$ whose roots
are Diophantine with respect to $\a$.
\end{definitionn}
\begin{definitionn} \label{defres group}
We will denote by $Res_{\a }$, the set of elements $S \in G$ having at least one
root that is resonant with respect to $\a$.
\end{definitionn}
We will also use the terms Resonant or Diophantine roots, by obvious extension of the
respective concepts.

We finally recall the notion of Recurrent Diophantine numbers, defined only when $d=1$.
\begin{definitionn} \label{defRDC}
We will denote by $RDC(\gamma ,\tau )$ the set of  \textit{recurrent Diophantine}
numbers, i.e. the $\a $ in $\T \setminus \Q$ such that
$\mathrm{G}^{n}(\a)\in DC(\gamma ,\tau )$ for infinitely many $n$.
\end{definitionn}
In purely local notation, $\mathrm{G}(\a)=\{\a^{-1}\}$ is the Gauss map ($\{ \. \} $ stands
for "fractional part"). The set $RDC$ is also of full measure.

\subsection{Cocycles}

The dynamics and the notion of conjugation having already been introduced in \S
\ref{secintro}, we give directly the following definition on conjugation.

\begin{definitionn}
Two cocycles $\Phi _{j} \in SW^{\infty}_{\a}(\Td , G)$, $j=1,2$, are $H^{s}$ conjugate,
with $s \in \R$, iff there exists a sequence $H_{i}(\. ) \in \sm (\Td , G)$ of
conjugations such that
\begeq
\Conj _{H_{i} (\.)}\Phi _{1} \overset{\sm }{\ra} \Phi _{2}
\text{ and }
H_{i} (\.) \overset{H^{s}}{\ra} H (\.) 
\endeq
\end{definitionn}

This definition coincides with the classical one when $s\geq 0$, but is also meaningful
for $s<0$.

\begin{definitionn}
A cocycle $\Phi \in SW^{\infty}_{\a}(\Td , G)$, is $H^{s}$ reducible, with $s \in \R$,
iff there exists a sequence $H_{i}(\. ) \in \sm (\Td , G)$ of conjugations and a
constant $A_{\infty}$ such that
\begeq
\Conj _{H_{i} (\.)}\Phi _{1} \overset{\sm }{\ra} (\a , A_{\infty})
\text{ and }
H_{i} (\.) \overset{H^{s}}{\ra} H (\.) 
\endeq
\end{definitionn}

Following \cite{El2002a}, we define Almost Reducibility.
\begin{definitionn} \label{defar}
A cocycle $\Phi \in SW^{\infty}_{\a}(\Td , G)$, is almost reducible, iff there exists
a sequence $H_{i}(\. ) \in \sm (\Td , G)$ of conjugations and a sequence of constant
cocycles $\Phi _{i}$ such that
\begeq
(\Phi _{i})^{-1} \0 \Conj _{H_{i}(\. )}\Phi \overset{\sm }{\ra} \Id \in SW^{\infty}
(\Td ,G)
\endeq
\end{definitionn}
Almost Reducibility thus amounts, with some obvious notation, to
\begeq
A_{i}^{*}.H_{i}(\. +\a ).A(\. ).H_{i}^{*}(\.) \overset{\sm }{\ra} \Id \in G
\endeq

The first result of the present paper is the justification of the following definition.
\begin{definitionn} \label{defrot vec}
Let $\a \in DC$ and $\Phi \in SW^{\infty}_{\a}$ be almost reducible. Let also
$H (\.) \in H^{s}$ for some $s \in \R$ and let $A _{\infty} = \exp (a_{\infty}) \in \TT$ with
$a _{\infty} \in \ft$ be such that
\begeq
\Phi = \Conj _{\HH } (\a , A _{\infty})
\endeq
Then, we will say that $a _{\infty}$ is a rotation vector for $\Phi $ and write
$a _{\infty} = \varrho (\Phi )$.\footnote{We point out the trouble in notation. We use
$\r$ for the roots of an element of $G$ and $\varrho $ for the rotation vector. Both
quantities are customarily denoted by $\r$, and we apologize for this notation.}
\end{definitionn}

We also point the reader to \S 6, theorem 6.1 of \cite{NKCohomRigAK} for the following
statement considering Almost Reducible cocycles.

\begin{theorem} [\cite{El2002a}, \cite{KrikAst}, \cite{NKPhD}] \label{thm almost red}
%Let $P = G/H$ be a homogeneous space of compact type, and $\a \in DC (\tilde{\gamma} , \tilde{\t } )$.
Let $G$ be a compact Lie group, and $\a \in DC (\g , \t)$. Then, Almost Reducibility
holds in an open subset of $SW^{\infty} (\Td ,G)$.

More precisely, the K.A.M. scheme that proves Almost Reducibility produces:
\begin{enumerate}
\item a fast increasing sequence $N_{n} \in \N ^{*}$, $N_{n+1} = N_{n}^{1+ \s }$ for
some $0 < \s < 1$
\item a sequence of constants $A _{n} \in G$
\item a number $M \in \N \cup \{ \infty \}$ of resonant steps, a number $\nu > \t $,
and a subsequence $n_{i}$ of resonant steps where $A_{n_{i}} \notin DC_{\a}(1,\nu ) $,
$1 \leq i < M$
\item a sequence of resonant constants
$\{ \L _{i}  \} _{i =1}^{M} \subset Res _{\a} \subset G$
whose resonance $k_{n_{i}}^{\r} \in \Zd \setminus
\{0\}$ in each resonant subalgebra $(su(2))_{\r}$, satisfies
$|k_{n_{i}}^{\r} | \leq N_{n_{i}}$, as well as the following properties.
The constant $\L _{i}$ commutes with $A_{n_{i}}$ and
\begeq
d(\L _{i}  , A_{n_{i}} ) <
N_{n_{i}} ^{-\nu }
\endeq
if such an $\L _{n_{i}}$ exists.
% Moreover, there exists a subalgebgra
%$(su(2)) _{i} \approx su(2) $ of $g $ such that
%\begeq
%d(Ad(\L _{i})\upharpoonright _{(su(2)) _{i}} , \{e \} ) < N_{n} ^{-\t }
%\endeq
The number $M \in \N \cup \{ \infty \} $ counts the number of steps of the K.A.M. scheme
for which $\L _{n_{i}}$ is defined
\item \label{item exp dec Y} a sequence of conjugations
$\exp (Y_{n}(\. ) )\in \sm (\T ^{d} ,G)$ satisfying
\begeq
\| Y_{n}(\. ) \|_{s} = O( N_{n}^{-\infty }), \forall s \geq 0
\endeq
\item \label{item pol growth B} a sequence of torus morphisms
$B_{n_{i}}(\. ) \in \sm (\Td ,G)$ satisfying
\begeq
B_{n_{i}} (\Zd ) \in \ZZ
\text{ and }
\| B_{n_{i}}(\. ) \|_{s} \simeq C_{s} N_{n_{i}}^{s+ \l }, \forall s \geq 0
\endeq
for some constant $\l >0$. The $B_{n_{i}}(\.)$ commute with the respective $A_{n_{i}}$ and
$\L _{n_{i}}$, and the constant
\begeq
B_{n_{i}}(\. + \a ). \L _{n_{i}} .B_{n_{i}}^{-1}(\. )
\endeq
is $N_{n_{i}}^{-\t }$-away from resonant constants. If $A_{n} $ is $N_{n}^{-\t }$-away
from resonant constants (i.e. if $n$ is not a resonant step) then $B_{n}(\. )$ is by
convention defined as $\equiv \Id$
\item a sequence of mappings $F_{n}(\. ) \in \sm (\T ^{d}, g)$ satisfying
\begeq
\| F_{n}(\. ) \|_{s} = O( N_{n}^{- \infty }), \forall s \geq 0
\endeq
\end{enumerate}
and such that the conjugation constructed iteratively following $H_{0} = \Id$ and
\begeq
H_{n} (\. )= \begin{cases}
e^{Y_{n}(\. )} H_{n-1} (\. ) \text{ if } n \notin \{ n_{i} \} \\
B_{n}(\. ) e^{Y_{n}(\. )} H_{n-1}(\. ) \text{ if } n \in \{ n_{i} \}
\end{cases}
\endeq
satisfies
\begeq
H_{n} (\. + \a ) Ae^{F(\. )} H_{n}^{-1}(\. ) = A_{n}. e^{F_{n}(\. )}
\endeq
\end{theorem}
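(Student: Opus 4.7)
The plan is to prove the theorem through a quadratically convergent K.A.M. iteration. The inductive hypothesis at step $n$ is that the cocycle has been conjugated into the form $A_{n} e^{F_{n}(\cdot)}$ with $A_{n} \in \TT$ and $\|F_{n}(\cdot)\|_{s} = O(N_{n}^{-\infty})$ for every $s \geq 0$. At step $n+1$, I would construct the conjugation $e^{Y_{n+1}(\cdot)}$ (and, when necessary, a torus morphism $B_{n+1}(\cdot)$) so that after composing with it, the new perturbation $F_{n+1}(\cdot)$ is quadratically smaller than $F_{n}(\cdot)$, up to polynomial losses absorbed by the rapid growth rule $N_{n+1}=N_{n}^{1+\s}$.

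The core of each non-resonant step is the linearized cohomological equation. Writing the conjugation as $e^{Y_{n+1}(\cdot)}$ and Taylor-expanding the equation
\begin{equation*}
e^{Y_{n+1}(\cdot+\a)}.A_{n}e^{F_{n}(\cdot)}.e^{-Y_{n+1}(\cdot)}=A_{n+1}e^{F_{n+1}(\cdot)},
\end{equation*}
I would obtain at first order
\begin{equation*}
Y_{n+1}(\cdot+\a)-Ad(A_{n}).Y_{n+1}(\cdot)=-F_{n}(\cdot)+(\text{constant absorbed in } A_{n+1}).
\end{equation*}
Decomposing $g$ along the root spaces $(su(2))_{\r}$ and taking Fourier coefficients diagonalizes this equation: on the $\r$-th root space, $\widehat{Y_{n+1}}(k)$ is obtained by dividing $\widehat{F_{n}}(k)$ by $e^{2i\pi k\cdot \a}-e^{2i\pi \r(a_{n})}$. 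I would truncate to $|k|\leq N_{n}$. Provided $A_{n}$ is $N_{n}^{-\nu}$-away from $Res_{\a}$, the hypothesis $\a \in DC(\g, \t)$ bounds these small divisors from below by $c |k|^{-\t}$, producing an $H^{s}$ estimate on $Y_{n+1}$ with only polynomial loss in $N_{n}$; the superexponential smallness of $F_{n}$ then preserves the inductive estimate at step $n+1$.

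If, on the contrary, some root of $A_{n}$ is within $N_{n}^{-\nu}$ of some $k^{\r}\cdot\a$ with $|k^{\r}|\leq N_{n}$, the step is declared a resonant step $n_{i}$. I would then define the resonant constant $\L_{n_{i}}$ as the nearby element of the same maximal torus (hence commuting with $A_{n_{i}}$) whose resonant roots realize exactly $k^{\r}\cdot\a$ modulo $\Z$. The torus morphism $B_{n_{i}}$ is built explicitly from the integer vectors $k_{n_{i}}^{\r}$, using the lattice $\ZZ \subset \ft$ to ensure a well defined map $\Td \to G$. By construction $B_{n_{i}}(\cdot+\a).\L_{n_{i}}.B_{n_{i}}^{-1}(\cdot)$ is a constant that is $N_{n_{i}}^{-\t}$-away from $Res_{\a}$, and the bound $|k_{n_{i}}^{\r}|\leq N_{n_{i}}$ yields directly the polynomial growth $\|B_{n_{i}}\|_{s}\simeq C_{s} N_{n_{i}}^{s+\l}$. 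After this gauge transformation, the non-resonant K.A.M. step above can be applied.

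The main obstacle, as in every K.A.M. scheme, is to reconcile the quadratic smallness produced at each cohomological step against the accumulated polynomial factors coming both from the small-divisor denominators and from the norms of the $B_{n_{i}}$. I would overcome this by choosing $\s$ small and $N_{0}$ large, so that the quadratic improvement dominates all polynomial losses at every step, and by verifying that after resolving a resonance at step $n_{i}$ the formerly resonant direction is pushed far from future $\a$-resonances up to order $N_{n_{i+1}}$. The openness of the almost reducibility set in $SW^{\infty}(\Td, G)$ is inherited from the scheme, since its convergence only requires smallness of $F_{0}$ in a fixed Sobolev norm, an open condition. Passing to the $\sm$ limit of the composed conjugations $H_{n}$ then yields the full list of objects in the statement.
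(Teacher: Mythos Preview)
The paper does not prove this theorem: it is quoted from \cite{El2002a}, \cite{KrikAst}, \cite{NKPhD} (and the reader is further referred to \S 6 of \cite{NKCohomRigAK}), and is used throughout the note as a black box supplying the K.A.M. normal form. There is therefore no ``paper's own proof'' to compare your proposal against.

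That said, your outline is the correct skeleton of the argument carried out in those references: linearize the conjugation equation, diagonalize via the root-space decomposition and Fourier transform, solve with truncation when the small divisors $e^{2i\pi k\cdot\a}-e^{2i\pi\r(a_{n})}$ are controlled by the Diophantine condition, and at resonant steps pre-compose with an explicit torus morphism built from the resonant integer vectors before returning to the non-resonant step. Two points in your sketch would need more care in an actual proof. First, you assume $A_{n}\in\TT$ at every step, but the constants $A_{n}$ need not all lie in the \emph{same} maximal torus; this is exactly why the conjugations $\exp(D_{i})$ appear in eq.~\eqref{eqfar from id conj}, and your construction of $B_{n_{i}}$ as ``built explicitly from the integer vectors $k_{n_{i}}^{\r}$'' omits this inner conjugation. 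Second, the assertion that ``after resolving a resonance at step $n_{i}$ the formerly resonant direction is pushed far from future $\a$-resonances up to order $N_{n_{i+1}}$'' is the delicate combinatorial/arithmetic heart of the scheme (it is what forces the choice $\nu>\t$ and the super-exponential growth of $N_{n}$), and your sketch states it without indicating why it holds.
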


In this notation, the K.A.M. normal form of almost reducible cocycles (see
\cite{NKRigidity}, \cite{NKInvDist} and \cite{NKContSpec} for a figure) is established
as follows.
\begin{lemma}\label{lemKAM normal form}
Under the assumptions of theorem \ref{thm almost red}, there exists a conjugation
$D(\. ) \in \sm (\Td ,G)$ such that the K.A.M. scheme applied to the cocycle
$\Conj _{D(\. )} \Phi $ produces only resonant steps.
\end{lemma}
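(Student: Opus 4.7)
The plan is to package all the non-resonant exponential corrections $e^{Y_{n}(\. )}$ produced by the K.A.M. scheme of Theorem \ref{thm almost red} into a single smooth conjugation $D(\. )$, so that the iteration carried out on $\Conj _{D(\. )} \Phi$ is driven purely by the resonant corrections $B_{n_{i}}(\. )$ and every step is resonant.

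I would start by invoking Theorem \ref{thm almost red} to obtain the conjugations $H_{n}(\. )$ with the recursion $H_{n} = X_{n} H_{n-1}$, where $X_{k}(\. ) = e^{Y_{k}(\. )}$ at non-resonant steps and $X_{k}(\. ) = B_{k}(\. ) e^{Y_{k}(\. )}$ at resonant steps. The goal is a factorisation
\begeq
H_{n}(\. ) = R_{n}(\. ) \. D_{n}(\. )
\endeq
where $R_{n}$ is built entirely from the torus morphisms $B_{n_{i}}$ (suitably $\a$-twisted) and $D_{n}$ from exponentials (suitably conjugated by intervening $B$'s). One constructs this inductively by moving each $e^{Y_{k}}$ past the $B_{n_{j}}$'s lying further out in the product, at the cost of replacing $Y_{k}$ by its $\mathrm{Ad}$-image under the intervening torus morphisms.

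The $\sm$-convergence of $D_{n}$ to some $D(\. )$ rests on the central estimate that the super-polynomial decay $\| Y_{k} \|_{s} = O(N_{k}^{-\infty })$ beats the polynomial Sobolev growth $\| B_{n_{j}} \|_{s} \lesssim N_{n_{j}}^{s + \l }$ of the torus morphisms it must cross. Since $N_{n_{j}} \leq N_{k}$ for $n_{j} < k$ and the scales grow as $N_{n+1} = N_{n}^{1 + \s }$ with $\s < 1$, the cumulative polynomial loss incurred by the chain of $\mathrm{Ad}$-conjugations is bounded by $N_{k}^{C}$ for a finite $C$ independent of $k$; the commuted exponentials therefore retain $O(N_{k}^{-\infty })$ Sobolev norms in every index. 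Summing over $k$ yields a $\sm$-convergent product $D_{n}(\. ) \to D(\. )$. By construction, $\Conj _{D(\. )} \Phi$ is then conjugated to $(\a , A_{n} e^{F_{n}(\. )})$ by $R_{n}$, a product of $B_{n_{i}}$-factors alone with no non-trivial exponential corrections.

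The main obstacle is verifying that running the K.A.M. scheme of Theorem \ref{thm almost red} anew on $\Conj _{D(\. )} \Phi$ actually reproduces the sequence $R_{n}$, so that every one of its steps is flagged as resonant in the sense of item (3) of that theorem. This is handled via the essentially deterministic character of the K.A.M. algorithm at each scale $N_{n}$: whenever the current main constant is $N_{n}^{-\n }$-close to a resonant constant, the algorithm is forced to apply a $B_{n}$-type correction and declares the step resonant; otherwise only an $e^{Y_{n}}$ correction is applied. The construction of $D(\. )$ guarantees that at every scale the perturbation has already been stripped of its smooth, Fourier-supported part, leaving behind only the resonant obstruction, which is exactly the claim.
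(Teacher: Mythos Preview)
The paper does not actually prove this lemma: it is stated with the phrase ``is established as follows'' and a pointer to \cite{NKRigidity}, \cite{NKInvDist}, \cite{NKContSpec} (and implicitly \S\S 9--10 of \cite{NKPhD}), after which the text simply describes the properties of a cocycle already in normal form. So there is no in-paper argument to compare your proposal against; the relevant comparison is with those references.

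Your factorisation idea --- commuting each $e^{Y_{k}}$ past the earlier torus morphisms $B_{n_{j}}$ and collecting the conjugated exponentials into a $\sm$-convergent product $D(\. )$ --- is the natural one and is indeed the mechanism used in those works. Your convergence estimate is correct: from $N_{n+1}=N_{n}^{1+\s}$ one gets $\prod_{j<k}N_{j}\leq N_{k}^{1/\s}$, so the cumulative polynomial loss from the chain of $Ad$-conjugations is a fixed power of $N_{k}$, which is absorbed by $\|Y_{k}\|_{s}=O(N_{k}^{-\infty})$.

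The soft spot is your last paragraph. From $H_{n}=R_{n}D_{n}$ you get $\Conj_{R_{n}}\Conj_{D_{n}(\. )}\Phi=(\a,A_{n}e^{F_{n}(\. )})$, but the lemma requires this with $D(\. )$ in place of $D_{n}(\. )$. The discrepancy is the tail $D_{n}^{*}D$, and after conjugation by $R_{n}$ (whose Sobolev norms grow like $N_{n}^{C}$) one must check that the induced error is still $O(N_{n}^{-\infty})$; this follows by the same estimate as above, but you do not say it. More seriously, the sentence ``the construction of $D(\. )$ guarantees that at every scale the perturbation has already been stripped of its smooth, Fourier-supported part'' is a restatement of the conclusion, not an argument. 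What is actually needed is to exhibit, for $\Conj_{D(\. )}\Phi$, a full set of data as in Theorem \ref{thm almost red} --- scales $N_{i}'$ (taken to be the old resonant scales $N_{n_{i}}$), constants $A_{i}'$, perturbations $F_{i}'(\. )$ --- satisfying all the listed estimates with $Y_{i}'\equiv 0$. Invoking the ``essentially deterministic character'' of the scheme does not discharge this, since the scheme involves choices (of the approximating resonant constant $\L_{n}$, of truncation levels), and one must verify that a consistent set of choices exists making every step resonant.
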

For a cocycle in normal form, we abbreviate $n_{i}$ to $i$. For a cocycle in normal form,
all close-to-the-$\Id$ conjugations of theorem \ref{thm almost red} are
$\exp (Y_{n}(\. ))\equiv \Id$. The conjugations $B_{i}(\. )$ are\footnote{Essentially, but
the difference is cumbersome to write down and irrelevant to the goal of the note. The
reader can consult \S 9 of \cite{NKPhD} for the details.} of the form
\begin{equation} \label{eqfar from id conj}
B_{i}(x ) = 
\exp( D_{i}).
\exp
(
\sum _{\r} 2i\pi (k^{\r}_{i}\. x )e _{\r}
)
\exp(-D_{i})
\end{equation}
where $\exp(D_{i})$ conjugates a maximal torus passing by $A_{i}$ to a maximal torus
passing by $A_{i+1}$.

We point out that if the K.A.M. normal form is finite, i.e. if $M<\infty$, then $\Phi$
is $\sm$ reducible, since the sequence $H_{n}(\. )$ converges in $\sm$.

\section{Statement of results}

\begin{assumption}
In all following theorems, $\a$, the frequency of the cocycle $\Phi = (\a , A(\.))$, is
Diophantine. The cocycle $\Phi \in SW^{\infty}_{\a}(\Td ,G)$ is assumed to be
almost reducible.
\end{assumption}

%In order to keep the note short, we refer to \cite{NKRigidity} for notation and only
%recall some basic definitions.

Our first theorem implies that definition \ref{defrot vec} is not void.

\begin{theorem} \label{thmxrot vec}
For each cocycle $\Phi $ there exists a fibered rotation vector $\varrho = \varrho (\Phi )$
satisfying definition \ref{defrot vec}.
\end{theorem}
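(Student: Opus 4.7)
The plan is to apply Theorem \ref{thm almost red}, extract a subsequential limit $A_n \to A_\infty \in G$ using compactness of $G$, and show that the corresponding conjugations $H_n$ converge in some negative Sobolev space $H^{-k}$. Combined with $\Conj_{H_n}\Phi \to (\alpha, A_\infty)$ in $\sm$, this realizes the $H^{-k}$-conjugation of $\Phi$ to a constant cocycle required by the definition of $H^{-k}$-conjugation. A final left-multiplication $H_\infty \mapsto C H_\infty$ by a constant $C \in G$ moves the constant via $A_\infty \mapsto C A_\infty C^{-1} \in \TT$, producing the rotation vector $a_\infty = \log(C A_\infty C^{-1}) \in \ft$.

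By Lemma \ref{lemKAM normal form} I may reduce to the normal form case, so that $\exp(Y_n(\cdot)) \equiv \Id$ and $H_n = B_n B_{n-1} \cdots B_1$. To establish Sobolev convergence, I telescope $H_n - H_{n-1} = (B_n - \Id) H_{n-1}$ and estimate via the standard Sobolev duality bound
\begin{equation*}
\|(B_n - \Id) H_{n-1}\|_{H^{-k}} \lesssim \|B_n - \Id\|_{H^{-k}} \|H_{n-1}\|_{C^k}.
\end{equation*}
The explicit form \eqref{eqfar from id conj} of $B_n$, combined with the frequency bound $|k^\rho_n|\leq N_n$ and the $L^\infty$ bound coming from $G$ being compact, gives $\|B_n - \Id\|_{H^{-k}} \lesssim N_n^{-k}$; the polynomial growth in item \ref{item pol growth B} together with Sobolev embedding gives $\|H_{n-1}\|_{C^k} \lesssim N_{n-1}^{k+\lambda+d/2+1}$. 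Since $N_n = N_{n-1}^{1+\sigma}$, the product simplifies to $N_{n-1}^{-k\sigma + \lambda + d/2 + 1}$, which is super-exponentially summable once $k$ is chosen larger than $(\lambda + d/2 + 1)/\sigma$. Hence $H_n$ is Cauchy, and converges strongly in $H^{-k}$ to some $H_\infty$.

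Passing to the limit in the identity $H_n(\cdot + \alpha) A(\cdot) = A_n e^{F_n(\cdot)} H_n(\cdot)$ is then routine: $H_n$ enters only through translation by $\alpha$ and multiplication by smooth functions, both continuous on $H^{-k}$, while $A_n \to A_\infty$ and $e^{F_n} \to \Id$ in $\sm$. The limit equation $H_\infty(\cdot + \alpha) A(\cdot) = A_\infty H_\infty(\cdot)$ thus holds in $H^{-k}$, and the conjugation to the maximal torus is achieved by the constant adjustment described above.

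The principal delicacy is the estimate $\|B_n - \Id\|_{H^{-k}} \lesssim N_n^{-k}$: it rests on the nontrivial Fourier modes of $B_n$ being concentrated at frequencies of order $N_n$ rather than near the previous scale $N_{n-1}$, a fact essentially built into the resonance structure at step $n$ in the normal form but requiring care to extract cleanly from the K.A.M. bookkeeping. As a softer alternative—should this prove awkward—one can use only the uniform $L^\infty$ (hence $L^2$) bound on $H_n$ stemming from $G$-valuedness, combined with Rellich compactness $L^2 \hookrightarrow H^{-\epsilon}$, to extract a strongly convergent subsequence in $H^{-\epsilon}$ for any $\epsilon > 0$; the same limiting procedure then yields $H_\infty \in L^2 \cap H^{-\epsilon}$ and an analogous, if less quantitative, rotation vector.
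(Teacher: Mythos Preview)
Your main quantitative argument contains a genuine gap. The torus morphism $B_n(\cdot)$ of eq.~\eqref{eqfar from id conj} is \emph{not} close to the identity in any Sobolev norm, positive or negative. In the $SU(2)$ case with $D_n = 0$, the diagonal entries of $B_n(x)$ are $e^{\pm 2i\pi k_n \cdot x}$, whose $0$-th Fourier coefficients vanish; hence the $0$-th Fourier coefficient of $B_n - \Id$ equals $-\Id$, and
\begin{equation*}
\|B_n - \Id\|_{H^{-k}} \geq \big|\widehat{B_n - \Id}\,(0)\big| \gtrsim 1
\end{equation*}
uniformly in $n$ and $k$. The frequency localisation $|k_n^\rho| \leq N_n$ is of no help: the obstruction lives at frequency zero, not at $k_n$. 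Your telescoped bound $\|(B_n - \Id) H_{n-1}\|_{H^{-k}} \lesssim \|B_n - \Id\|_{H^{-k}} \|H_{n-1}\|_{C^k}$ therefore yields a quantity that grows with $n$, and the Cauchy argument for $H_n = B_n \cdots B_1$ collapses.

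The paper (via Lemma~\ref{lemdef rot vec}) sidesteps this by replacing $B_i$ with the conjugation $G'_i(\cdot) = B_i^{*}(\cdot)\exp(D_i)B_i(\cdot)$, i.e.\ the $B_i$-conjugate of a \emph{constant} close to $\Id$. Now the zero mode of $G'_i - \Id$ is governed by $\exp(D_i) - \Id$, which is small because successive maximal tori converge, while the nonconstant part is supported exactly at the resonant frequencies $k_i^\rho$ and hence decays in $H^{-d-d'}$. This modification simultaneously forces all intermediate constants $A_i$ into the \emph{fixed} torus $\TT$ and makes them a Cauchy sequence there, so neither a compactness extraction for $A_\infty$ nor a post-hoc constant conjugation into $\TT$ is needed.

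Your softer alternative via Rellich compactness is correct and does establish the bare existence claim of Theorem~\ref{thmxrot vec}: uniform $L^2$-boundedness of $H_n$ (from $G$-valuedness) gives a subsequence converging strongly in $H^{-\epsilon}$, and along a further subsequence $A_n \to A_\infty$, which is enough for Definition~\ref{defrot vec}. It is, however, strictly weaker than the paper's argument, since it produces only subsequential convergence at an unspecified negative index and says nothing about the limit being nondegenerate; the explicit index $-d-d'$ and full-sequence convergence are what feed into Theorem~\ref{thmxred in neg reg} and the well-posedness lemmas.
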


Concerning the invariance of the rotation vector under conjugations, we prove
the following propositions.

\begin{proposition} \label{propxrot vec transl}
The fibered rotation vector $\varrho $ is invariant under conjugation in
$\DD '$, and well defined in $\ft \mod (\a \. \Zd)\ZZ$.
\end{proposition}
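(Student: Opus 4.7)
The plan is to merge both statements into one algebraic fact: if two constant cocycles $(\a, A_1)$ and $(\a, A_2)$ with $A_j = \exp(a_j)\in \TT$ are $\DD'$-conjugate, via some $L \in \DD'(\Td, G)$, then $a_1 \equiv a_2 \bmod (\a\cdot \Zd)\ZZ$. The well-definedness of $\varrho$ in the quotient $\ft\bmod(\a\cdot\Zd)\ZZ$ follows by taking two reducing data $(H_j, A_j)$ for the same $\Phi$ and setting $L = H_1^{-1} H_2$. Invariance under $\DD'$-conjugation follows by composition: if $\Phi_1 = \Conj_K \Phi_2$ with $K\in \DD'$ and $\Phi_j = \Conj_{H_j}(\a, A_j)$ as in Definition \ref{defrot vec}, then $L = H_1^{-1} K H_2$ intertwines the two constant cocycles.

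The constant-constant conjugation equation reads, as a distributional identity,
\[
A_1\, L(x) \;=\; L(x+\a)\, A_2 .
\]
For every irreducible unitary representation $\pi$ of $G$, postcomposition yields a matrix-valued equation $\pi(A_1)\Psi(x) = \Psi(x+\a)\pi(A_2)$, with $\Psi = \pi(L) \in \DD'(\Td, \mathrm{End}(V_\pi))$. Since $A_1, A_2 \in \TT$, we work in a weight basis $(v_\ell)$ of $V_\pi$, on which $\pi(A_j)v_\ell = e^{2i\pi\mu_\ell(a_j)}v_\ell$ for the weights $\mu_\ell \in \ft^*$ of $\pi$. Expanding $\Psi(x) = \sum_{k\in\Zd} \hat\Psi(k)\, e^{2i\pi k\cdot x}$ in Fourier series and matching coefficients entrywise gives
\[
e^{2i\pi \mu_\ell(a_1)}\, \hat\Psi_{\ell,m}(k) \;=\; e^{2i\pi(k\cdot\a + \mu_m(a_2))}\, \hat\Psi_{\ell,m}(k),
\]
so that each non-vanishing Fourier mode forces the arithmetic constraint
\[
\mu_\ell(a_1) \;\equiv\; \mu_m(a_2) + k\cdot \a \pmod{\Z}.
\]

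Running $\pi$ through a family of irreducible representations whose weights generate the dual lattice $\ZZ^*\subset \ft^*$, and selecting non-trivial diagonal modes $\ell = m$ (which must exist because $\Psi$ is a distributional limit of $U(V_\pi)$-valued maps, hence in particular not identically zero up to an internal symmetry of $\TT$), one reads off that $\mu(a_1 - a_2)$ lies in $\a\cdot\Zd + \Z$ for a generating family of weights $\mu$. By duality with $\ZZ$ this is precisely $a_1 - a_2 \in (\a\cdot\Zd)\ZZ$. The realizability of this lattice is explicit and shows the quotient is sharp: for any $k\in\Zd, j_0 \in \ZZ$, the map $x\mapsto \exp(2i\pi(k\cdot x)j_0)$ is a well-defined smooth morphism $\Td \to G$ (since $j_0 \in \ZZ$), commutes with $A_\infty \in \TT$, and replacing $H(\cdot)$ by $H(\cdot)\exp(2i\pi(k\cdot x)j_0)$ leaves $\Phi$ unchanged while shifting the reducing constant by $\exp(2i\pi(k\cdot\a)j_0)$, i.e.\ $a_\infty$ by $(k\cdot\a)j_0$. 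The main technical obstacle is making rigorous sense of the products $H_1^{-1}KH_2$ and of $\pi(L)$ when $L$ is only a distribution; this is resolved by approximating each factor in the appropriate Sobolev topology by smooth $G$-valued cocycles as in Definition \ref{defar}, so that the Fourier-coefficient identities above are obtained as genuine limits of identities among smooth functions, where the arithmetic argument is elementary.
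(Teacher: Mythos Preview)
Your reduction to the constant--to--constant problem is natural and is indeed the content of Lemma~\ref{lemconj const}, but the step where you form $L = H_1^{-1} H_2$ (or $H_1^{-1} K H_2$) has a genuine gap that the approximation argument does not close. The reducing data $H_j$ are given only as $H^{s_j}$-limits of smooth $G$-valued maps $H_i^{(j)}$, with $s_j$ possibly negative. The products $L_i = (H_i^{(1)})^{*} H_i^{(2)}$ are smooth and $G$-valued, hence uniformly bounded in $L^2$, but nothing forces them to converge in any $H^s$, nor forces any weak limit to be non-zero. For $G=U(1)$ and $H_i^{(j)}(x)=e^{2i\pi N_i^{(j)} x}$ with $N_i^{(j)}\to\infty$, each factor tends to $0$ in $H^{-1}$, while $L_i(x)=e^{2i\pi (N_i^{(2)}-N_i^{(1)})x}$ may fail to converge or may converge to $0$; in the latter case every Fourier coefficient of the limit vanishes and your arithmetic constraint on $a_1-a_2$ is vacuous. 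Your parenthetical justification that diagonal modes ``must exist because $\Psi$ is a distributional limit of $U(V_\pi)$-valued maps'' is exactly what fails: unitarity is not preserved under weak or negative-order Sobolev limits.

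This obstruction is precisely why the paper argues via Lemmas~\ref{lemconv conj}, \ref{lemdiv conj} and~\ref{lemdiv csts} rather than via Lemma~\ref{lemconj const} alone. Those lemmas never multiply two distributional conjugations. Instead they use the K.A.M. normal form to factor each individual reducing sequence $H_i(\cdot)$ as a product of close-to-the-identity maps and torus morphisms $B_i(\cdot)$ as in eq.~\eqref{eqfar from id conj}, and then read off directly from this factorization both whether the sequence converges in $\DD'$ and how the limit constant can move. The comparison of two reducing sequences is thus carried out at the level of their normal-form factorizations, not by forming their quotient. Your argument would go through if you could first prove that $L_i$ converges in $\DD'$ to something with a non-vanishing Fourier coefficient, but establishing that is essentially the content of Lemmas~\ref{lemconv conj}--\ref{lemdiv csts} and requires the K.A.M. input.
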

We remark that the same phenomenon of indeterminacy occurs for rotation vectors of
diffeomorphisms of the torus $\Td$, $d \geq 2$, when we allow non homotopic to the $\Id $
conjugations to act.

\begin{proposition} \label{propxrot vec inv}
The fibered rotation vector $\varrho$ is invariant under conjugations $B(\. )$ for which
there exists $b(\. ) : \Td \ra g$ such that $B(\. ) = \exp (b(\. ))$. Representatives
of $\varrho (\Phi )$ depend continuously on $\Phi $ in the $C^{0}$ topology for fixed
$\pi (\Phi )$.
\end{proposition}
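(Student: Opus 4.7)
The plan is two-step: first establish continuous dependence of representatives on $\Phi$ in $C^0$ (the second claim), then deduce invariance under $\exp(b)$-conjugations by a homotopy argument (the first claim).

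For continuity, the rotation vector of $\Phi$ is extracted from the K.A.M. normal form of lemma \ref{lemKAM normal form} and theorem \ref{thm almost red}: the scheme produces a limit constant $A_\infty$ such that $\Phi$ is distributionally conjugate to $(\a, A_\infty)$, and $\varrho(\Phi) = a_\infty$ where $A_\infty = \exp(a_\infty)$. For $\Phi'$ in a $C^0$ neighborhood of a fixed $\Phi$, I would run the K.A.M. scheme in parallel: each step is a continuous perturbative operation, and for small enough $C^0$ perturbations the discrete data (resonant steps $n_i$, resonances $k^{\r}_{n_i}$, resonant constants $\L_{n_i}$) remain stable at every finite scale. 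The tail is controlled uniformly by the super-polynomial decay estimates on $Y_n$ (item \ref{item exp dec Y}) and $F_n$, and a local choice of logarithm branch then provides a continuous representative $a_\infty(\Phi')$ near $a_\infty(\Phi)$.

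For invariance, given $b(\cdot): \Td \to g$, define $B_t(\cdot) = \exp(t b(\cdot))$, $t \in [0,1]$, a continuous path in $SW^\infty_0$ from $\Id$ to $B$. Then $\Phi_t := \Conj_{B_t}\Phi$ is a continuous $C^0$-path in $SW^\infty_\a$, and by the continuity just established there is a continuous selection $t \mapsto \varrho(\Phi_t)$. Proposition \ref{propxrot vec transl} constrains this selection to remain in the coset $\varrho(\Phi) + (\a \cdot \Zd)\ZZ$, but the additive shift is determined by the homotopy class of the conjugation as a map $\Td \to G$: torus-morphism factors of the form \eqref{eqfar from id conj} would contribute nonzero shifts, whereas the entire path $B_t$ lies in the identity component of $C^0(\Td, G)$. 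Hence the shift is locally constant in $t$ and equals its value at $t=0$, namely $0$, giving $\varrho(\Conj_B \Phi) = \varrho(\Phi)$.

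The main obstacle is the continuity claim itself. The K.A.M. scheme depends on discrete choices whose stability under $C^0$ perturbations is the nontrivial point: one must show that openness of the resonance-selection criteria, combined with the perturbative nature of each step, keeps these choices fixed at every finite scale for a uniform $C^0$-neighborhood of $\Phi$. Once this is in place, the super-polynomial decay of the remainder terms in theorem \ref{thm almost red} handles the tail, and the limit constant $A_\infty$ becomes a continuous function of $\Phi$ in $C^0$, feeding both assertions of the proposition.
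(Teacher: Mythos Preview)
Your continuity argument (running the K.A.M. scheme in parallel for nearby cocycles) is essentially the same as the paper's, and both treatments leave the stability of the discrete choices at a comparable level of detail.

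The invariance argument, however, has a genuine gap. You deduce it from continuity via the homotopy $B_t = \exp(tb)$, asserting that the shift $a_\infty(\Phi_t) - a_\infty(\Phi_0)$ is ``locally constant in $t$'' because it is constrained to lie in $(\a \cdot \Zd)\ZZ$. But this set is \emph{not} discrete in $\ft$: since $\a$ is irrational, $\{\a \cdot k : k \in \Zd\}$ is dense in $\R$, so $(\a \cdot \Zd)\ZZ$ is dense in $\ft$, and continuity alone does not force local constancy. You also assert that ``the additive shift is determined by the homotopy class of the conjugation'', but this is precisely the content of the proposition you are proving, and you give no independent justification for it; invoking it here is circular. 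Your argument can be repaired by observing that $(\a \cdot \Zd)\ZZ$ is \emph{countable}, so a continuous map from the connected interval $[0,1]$ into it must be constant --- but this is not the reasoning you wrote.

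The paper proceeds differently and avoids the homotopy altogether: it combines lemmas \ref{lemconj const}, \ref{lemconv conj} and \ref{lemdiv conj} directly. These lemmas give a structural description of what a $\DD'$-convergent conjugation between constant models can look like (a smooth $D(\cdot)$ that factors as finitely many torus morphisms times close-to-$\Id$ pieces), from which one reads off that a conjugation admitting a bounded lift $b(\cdot)$ contributes no torus-morphism factor and hence no shift of the representative. This route does not need continuity of $\varrho$ as an input, and in the paper the two halves of the proposition are established independently rather than one from the other.
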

We remark that a mapping $\Td \ra G$ may lift to a mapping $\Rd \ra g$, for example the
conjugations $B_{i} (\. )$ of theorem \ref{thm almost red}.
Conjugations in the space of cocycles
that admid a bounded lift in $g$ play the role of homotopic-to-the-$\Id$ conjugations
for torus diffeomorphisms.

\begin{theorem}  \label{thmxred in neg reg}
The fibered rotation vector $\varrho$ is a total invariant for $H^{-d-d'}$ conjugation:
every almost reducible cocycle is $H^{-d-d'}$ conjugate to a constant one with the
corresponding rotation vector.
\end{theorem}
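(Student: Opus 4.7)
The plan is to extract the conjugating sequence directly from the K.A.M. scheme of Theorem \ref{thm almost red}. By Lemma \ref{lemKAM normal form} I may first conjugate $\Phi$ by some $D(\cdot) \in C^\infty(\Td, G)$ so that the scheme is in normal form, and Propositions \ref{propxrot vec inv} and \ref{propxrot vec transl} ensure that the rotation vector is preserved (up to the natural indeterminacy in $\ft \mod (\alpha\cdot\Zd)\ZZ$). The theorem then produces a sequence $H_n = B_n B_{n-1}\cdots B_1 \in C^\infty(\Td, G)$, with the $B_i$ as in \eqref{eqfar from id conj}, satisfying
\begin{equation*}
\Conj_{H_n}\Phi = (\alpha, A_n e^{F_n(\cdot)}), \quad F_n \to 0 \text{ in } C^\infty,
\end{equation*}
and by Theorem \ref{thmxrot vec} together with Definition \ref{defrot vec}, $A_n \to A_\infty = \exp(\varrho(\Phi))$.

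Next I would exploit the crucial fact that each $H_n$ takes values in the compact group $G$. After fixing a faithful unitary representation of $G$, the sequence $(H_n)$ is uniformly bounded in $L^\infty(\Td)$, hence in $L^2(\Td)$. Since $\Td$ is compact, the Rellich--Kondrachov embedding $L^2 \hookrightarrow H^{-s}$ is compact for every $s > 0$. A diagonal extraction therefore yields a subsequence $H_{n_k}$ converging strongly in every $H^{-s}$, $s > 0$, in particular in $H^{-d-d'}$, to a limit $H$. Along this subsequence $\Conj_{H_{n_k}}\Phi \to (\alpha, A_\infty)$ still holds in $C^\infty$, so the definition of $H^{-d-d'}$-conjugation is met and $\Phi$ is $H^{-d-d'}$-conjugate to $(\alpha, \exp(\varrho(\Phi)))$. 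The converse direction of the total invariant claim is immediate from Proposition \ref{propxrot vec transl}, since $H^{-d-d'} \subset \DD'$.

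The main obstacle I foresee is pinning down the specific exponent $-d-d'$ rather than merely some $-s$ with $s > 0$: the compactness route is non-quantitative. A sharper path would estimate the Cauchy differences $H_{n+1} - H_n = (B_{n+1} - \Id)H_n$ in negative Sobolev norm, exploiting the explicit Fourier localization of $B_{n+1}$ from \eqref{eqfar from id conj}. In a weight basis for a matrix realization of $G$, the oscillatory factor of $B_{n+1}$ decomposes as a sum of characters of $\Td$ whose non-trivial frequencies have magnitude of order $N_{n+1}$; combined with the uniform $L^\infty$ bound on $H_n$, this should yield $\|H_{n+1}-H_n\|_{H^{-s}} \lesssim N_{n+1}^{d/2-s}$, summable whenever $s$ exceeds $d/2$ by a definite amount. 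The precise exponent $d+d'$ then plausibly arises by a combination of this base-space threshold and the contribution of the $d'$-dimensional target group (the number of weights of a faithful representation), matching the dimension of the full phase space $\Td \times G$; and this quantitative route would also dispense with passing to a subsequence.
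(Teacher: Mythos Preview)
Your proposal is correct, but you have the logic of your own argument slightly backwards. The paper's proof is a one-line citation of Lemma~\ref{lemdef rot vec}, and the mechanism of that lemma is exactly the quantitative Fourier-localization estimate you outline in your third paragraph: the conjugations $G'_i(\cdot)=B_i^{*}(\cdot)\exp(D_i)B_i(\cdot)$ carry only the resonant frequencies $k_i^{\rho}$, and combined with the uniform $L^{\infty}$ bound this makes the increments $H_{i+1}-H_i$ summable in $H^{-d-d'}$ (the word ``say'' in the lemma's proof signals that this exponent is a convenient choice, not a sharp threshold). So your third paragraph is not a refinement still to be carried out --- it \emph{is} the paper's argument.

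Your second paragraph, by contrast, is a genuinely different route. The soft compactness argument --- $H_n$ uniformly bounded in $L^{2}$ because $G$ is compact, then Rellich--Kondrachov for the embedding $L^{2}\hookrightarrow H^{-s}$ --- is valid and produces a subsequence converging in \emph{every} $H^{-s}$ with $s>0$, in particular in $H^{-d-d'}$; your worry about ``pinning down'' the specific exponent is therefore unfounded. What this approach sacrifices is convergence of the full K.A.M. sequence: the quantitative Lemma~\ref{lemdef rot vec} gives that, and the paper uses it elsewhere (the well-posedness lemmas \ref{lemconv conj}--\ref{lemdiv csts} and the proof of Theorem~\ref{thmxhypoell}). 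For the bare statement of the present theorem a subsequence suffices, and your identification of the limit constant with $\exp(\varrho(\Phi))$ via Theorem~\ref{thmxrot vec} is legitimate --- though note that Theorem~\ref{thmxrot vec} itself is proved through Lemma~\ref{lemdef rot vec}, so you are not bypassing that lemma entirely.
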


We finally prove the following hypoellipticity theorem, stating that under some
relevant arithmetic assumptions, the conjugation of theorem \ref{thmxred in neg reg}
is actually smooth.
\begin{theorem} \label{thmxhypoell}
If $\varrho = \varrho (\Phi )$ satisfies a Diophantine Condition with respect to $\a$,
then $\Phi $ is smoothly conjugate to (any representative of) $(\a , \exp (\varrho ))$.
\end{theorem}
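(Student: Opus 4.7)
The plan is to apply the K.A.M.\ normal form provided by Theorem \ref{thm almost red} and Lemma \ref{lemKAM normal form} to $\Phi $, and to show that the Diophantine hypothesis on $\varrho $ forces the number $M$ of resonant steps to be finite. Once $M < \infty $, the sequence of smooth conjugations $H_{n}(\. )$ converges in $\sm $, yielding $\sm $ reducibility of $\Phi $; the limit constant will then necessarily represent $(\a , \exp (\varrho ))$, by combining Proposition \ref{propxrot vec inv} with Theorem \ref{thmxred in neg reg}.

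After placing $\Phi $ in normal form, I would first identify $\varrho $ with a limit point $a_{\infty }$ of the sequence $a_{n}$, where $A_{n} = \exp (a_{n})$, modulo the indeterminacy of Proposition \ref{propxrot vec transl}. Since the error terms $F_{n}(\. )$ are $O(N_{n}^{-\infty })$ in every Sobolev norm, the cocycles $(\a , A_{n} e^{F_{n}(\. )})$ converge smoothly to $(\a , A_{\infty })$ along any convergent subsequence, and the smooth conjugations $H_{n}(\. )$ identify them with $\Phi $. Applying the continuity clause of Proposition \ref{propxrot vec inv} and the uniqueness established by Theorem \ref{thmxred in neg reg} then identifies $a_{\infty }$ with $\varrho $ modulo the lattice $(\a \. \Zd ) \ZZ $.

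Arguing by contradiction, suppose $M = \infty $. At each resonant step $n_{i}$, Theorem \ref{thm almost red} supplies a root $\r \in \D $ and a vector $k_{i} := k_{n_{i}}^{\r } \in \Zd \setminus \{ 0 \}$, with $|k_{i}| \leq N_{n_{i}}$, such that
\begin{equation*}
|\r (a_{n_{i}}) - k_{i} \. \a |_{\Z } \leq C N_{n_{i}}^{-\nu }.
\end{equation*}
Since $a_{n_{i}} \to \varrho $ modulo $(\a \. \Zd ) \ZZ $, and such translates leave the above Diophantine defect invariant by the very definition of $\ZZ $, the Diophantine hypothesis on $\varrho $ gives $|\r (\varrho ) - k_{i} \. \a |_{\Z } \geq \tilde{\g }^{-1} N_{n_{i}}^{-\tilde{\t }}$. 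Choosing the K.A.M.\ exponent $\nu > \tilde{\t }$, which is a free parameter in the scheme whose enlargement only shrinks an initial neighborhood that is irrelevant past the almost reducibility threshold, yields a contradiction for large $i$. Hence $M < \infty $ and $\Phi $ is $\sm $ reducible as desired.

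The main obstacle I foresee is the second step: rigorously matching the rotation vector $\varrho $ arising from the distributional conjugation of Theorem \ref{thmxred in neg reg} with the asymptotic data of the K.A.M.\ scheme. Specifically, one must control the action on the constants $A_{n_{i}}$ of the torus-morphism part of each $B_{n_{i}}(\. )$ displayed in \eqref{eqfar from id conj}, whose effect lives precisely in the indeterminacy lattice $(\a \. \Zd ) \ZZ $; once this is dispatched, everything else reduces to quantitative bookkeeping on the estimates of Theorem \ref{thm almost red}.
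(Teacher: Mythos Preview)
Your proposal is correct and follows essentially the same route as the paper: assume $M=\infty$, compare the Diophantine lower bound $\gtrsim |k_{i}|^{-\tilde{\t}}$ on the distance of $A_{\infty}$ to resonances with the K.A.M.\ upper bound $<|k_{i}|^{-\nu}$ (taking $\nu>\tilde{\t}$), and derive a contradiction. The ``main obstacle'' you flag largely dissolves in the paper's setup, since $\varrho$ is \emph{defined} via Lemma~\ref{lemdef rot vec} as the limit $a_{\infty}$ of the K.A.M.\ constants, so the identification you worry about is built in rather than something to be proved separately.
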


In particular, we obtain the following affirmative answer to problem 2 of \cite{GrWal73}
in our context. We remind that the conjugation operator associated to a cocycle
$\Phi = (\a , A(\. ))$ is the operator acting on $\sm (\Td , G)$ by
\begeq
H(\. ) \mapsto H(\. + \a) .A(\. ) H^{*}(\. )
\endeq
Theorem \ref{thmxhypoell} is thus the non-linear analogue of the main theorem in
\cite{NKInvDist}.
\begin{corollary}
If the conjugation operator of an almost reducible cocycle (or quasi-periodic flow) in
$\Td \times G$, $G$ a compact Lie group, is Globally Hypoelliptic, then $G$ is a torus.
\end{corollary}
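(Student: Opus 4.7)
The strategy is to first reduce $\Phi$ to a constant cocycle via the smooth reducibility content of the preceding theorems, and then to exploit, in the spirit of Greenfield-Wallach, the invariant distributions produced by a non-abelian fiber group.

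By Theorem~\ref{thmxrot vec}, $\Phi$ carries a fibered rotation vector $\varrho = \varrho(\Phi) \in \ft$; Theorem~\ref{thmxred in neg reg} then provides $H \in H^{-d-d'}(\Td, G)$ with $\Conj_H \Phi = (\alpha, \exp(\varrho))$. Since the right-hand side is smooth (and even constant in $x$), the Global Hypoellipticity hypothesis, applied to the equation $T_{\Phi}(H) = (\alpha, \exp(\varrho))$ defining the conjugation operator at $H$, forces $H$ itself to be smooth. Hence $\Phi$ is smoothly reducible to the constant cocycle $(\alpha, A_{\infty})$ with $A_\infty := \exp(\varrho) \in \TT$. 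Because the conjugation operator transforms covariantly under smooth conjugation, Global Hypoellipticity is inherited by the constant model, so I may assume throughout that $\Phi = (\alpha, A_{\infty})$.

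Now suppose, for contradiction, that $G$ is not a torus, so that the maximal torus $\TT$ is a proper subgroup and $\TT \backslash G$ is a compact manifold of positive dimension. The orbits of $\Phi$ in $\Td \times G$ are contained in the sets $\Td \times \TT g_0$, since $A_\infty \in \TT$ implies $A_{\infty}^{n} g_0 \in \TT g_0$ for every $n \in \Z$; therefore the natural projection $\Td \times G \to \TT \backslash G$, $(x,g) \mapsto \TT g$, is a $\Phi$-invariant map with non-trivial target. Pulling back a non-smooth distribution on $\TT \backslash G$ (for instance a Dirac mass at a point) yields a non-smooth $\Phi$-invariant distribution $u \in \DD'(\Td \times G)$. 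Interpreting the conjugation operator as the natural differential-operator-type action on the enveloping phase space (the discrete analogue of the Lie derivative invoked in the Greenfield-Wallach setting, to which the paper explicitly appeals), the distribution $u$ lies in its kernel while failing to be smooth, contradicting Global Hypoellipticity. Hence $G = \TT$, i.e., $G$ is a torus.

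The main obstacle is the second step, where one must carefully match the conjugation operator of the paper (acting on transfer functions $\Td \to G$) to a differential-operator-type action on the enveloping phase space $\Td \times G$, so that the Greenfield-Wallach obstruction coming from $\TT \backslash G$ can be brought to bear. Once that identification is in place, the argument is a direct transcription of the classical reasoning on hypoelliptic vector fields and applies verbatim to the continuous-time case: the constant cocycle is replaced by the quasi-periodic vector field $(\omega, \xi_\infty)$, and the obstruction comes from the (generically proper) closure $\overline{\exp(\R \xi_\infty)} \subsetneq G$ of the one-parameter subgroup generated by $\xi_\infty$.
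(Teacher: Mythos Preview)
The paper does not give a proof of this corollary: it is asserted as an immediate consequence of Theorem~\ref{thmxhypoell} and the main theorem of \cite{NKInvDist}, with the single sentence ``Theorem~\ref{thmxhypoell} is thus the non-linear analogue of the main theorem in \cite{NKInvDist}'' serving as the entire justification.

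Your first step --- using the hypothesis to promote the $H^{-d-d'}$ conjugation of Theorem~\ref{thmxred in neg reg} to a smooth one and thereby reducing to a constant model $(\alpha,A_{\infty})$ --- is sound and matches the paper's spirit. The gap you flag in the second step is genuine, however, and not a mere technicality. The conjugation operator, as defined in the paper immediately before the corollary, acts on transfer functions $H:\Td\to G$ via $H\mapsto H(\cdot+\alpha)A(\cdot)H^{*}(\cdot)$. Your obstruction --- the pullback of a Dirac mass along $\Td\times G\to\TT\backslash G$ --- lives in $\DD'(\Td\times G)$ and lies in the kernel of the \emph{Koopman} operator $f\mapsto f\circ\Phi-f$, which is a different object. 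A non-smooth invariant distribution on the phase space does not produce a non-smooth $H\in\DD'(\Td,G)$ with smooth image under the conjugation operator; indeed, when $A_{\infty}\in DC_{\alpha}$ the linearised conjugation operator in each root direction is the twisted cohomological equation with Diophantine twist, and \emph{is} hypoelliptic. Note also that you are invoking the hypothesis in two incompatible ways: Step~1 uses GHE of the conjugation operator on transfer functions, while Step~2 uses GHE of the Koopman operator on the phase space, and you do not argue that one implies the other.

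Your argument \emph{does} handle the parenthetical continuous-time statement, where the Greenfield--Wallach operator is precisely the Lie derivative on $C^{\infty}(\Td\times G)$ and your $\TT\backslash G$ obstruction applies verbatim. For the discrete-time conjugation operator as the paper defines it, the intended route is to invoke the explicit Fourier-side constructions of \cite{NKInvDist} (and the optimality statements of \cite{NKContSpec}), which exhibit the failure of hypoellipticity directly for the cohomological/conjugation equation rather than through the homogeneous quotient; that external input is what your argument is missing.
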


Some technicalities aside (that need to be settled), the same holds in the total space
$SW^{\infty}_{\a} (\T ,G)$ of one-frequency cocycles over $RDC$ rotations (see the
discussion preceeding eq. \eqref{eqskew shift}), and we expect the same to hold for
cocycles over all irrational rotations.

\textbf{Acknowledgement}: This work was supported by the LABEX CEMPI (ANR-11-LABX-0007-01).

\section{The basic lemmas}

The results proving the well-posedness of the definition of $\varrho$ are proved
via the following sequence of lemmas.
The reader not familiar with the structure of Lie groups can replace $G $ with
$G = SU(2)$ in order to keep algebra simple. As in \cite{NKRigidity}, this makes
the arguments more transparent without harming generality.

\begin{assumption}
In the lemmas throughout the section, we assume that the quantity $M$ of
theorem \ref{thm almost red} is infinite, in order to treat the difficult and interesting
case.
\end{assumption}

\begin{lemma} \label{lemdef rot vec}
Let $\Phi = (\a , A(\. ) )$ be an almost reducible cocycle. Then, there exists
$H_{i}(\. ) \in \sm (\Td ,G)$ a sequence of smooth conjugations converging
in $H^{-d-d'}$ and a sequence
$(\a , \L _{i})$ of resonant constant cocycles, $\L _{i} \in \TT $, such that
\begeq
\L _{i} \ra A _{\infty} \in \TT
\endeq
and, if we call $(\a ,A_{i}(\. )) = \Conj _{H_{i}(\.)} \Phi$, then
\begeq
\L _{i}^{*}A_{i}(\. ) \overset{\sm}{\ra} \Id
\endeq
\end{lemma}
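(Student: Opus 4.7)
My plan is to extract the conjugations $H_i$ and resonant constants $\L_i$ directly from the K.A.M.\ scheme of Theorem \ref{thm almost red}. After the $\sm$ conjugation $D(\cdot)$ of Lemma \ref{lemKAM normal form}, we may assume $\Phi$ is already in normal form, so that every step is resonant and the factors $e^{Y_n(\cdot)}$ are trivial. Then $H_i = B_i H_{i-1}$ with $B_i$ the torus morphism of eq.~\eqref{eqfar from id conj}, and $\Conj_{H_i}\Phi = (\a, A_i e^{F_i(\cdot)})$ with $A_i \in \TT$, $d(\L_i, A_i) < N_i^{-\nu}$, and $\|F_i\|_s = O(N_i^{-\infty})$ for every $s$. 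The three conclusions of the lemma then correspond to three largely independent pieces.

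The convergence $\L_i^{*} A_i(\cdot) \to \Id$ in $\sm$ follows at once from the factorisation $\L_i^{*} A_i(\cdot) = (\L_i^{*} A_i)\, e^{F_i(\cdot)}$: the constant factor lies within $N_i^{-\nu}$ of $\Id$, and the second factor tends to $\Id$ in $\sm$ by the Sobolev decay of $F_i$. For the convergence $\L_i \to A_\infty \in \TT$, I would analyse the recursion expressing $A_{i+1}$ as the constant part of $B_{i+1}(\cdot+\a)\,A_i e^{F_i(\cdot)}\, B_{i+1}^{*}(\cdot)$: the intertwining $\exp(D_{i+1})$ built into $B_{i+1}$ places $A_i$ and $A_{i+1}$ in a common maximal torus, so the residual shift is an element of $\TT$ built from $B_{i+1}(\a)$ and the super-exponentially small $F_i$. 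Tracking these cumulative shifts yields a Cauchy sequence $(\L_i)$ in $\TT$ whose limit is the desired $A_\infty$.

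The most delicate step is the $H^{-d-d'}$-convergence of $H_i$. The natural idea is to estimate the Cauchy differences $H_{i+1}-H_i=(B_{i+1}-\Id)H_i$ in Fourier: $\widehat{B_{i+1}}$ is supported on the finite set $\{\pm k_{i+1}^{\rho}\}$ of modulus $\leq N_{i+1}$, while iterating $\widehat{H_j}=\widehat{B_j}\ast\widehat{H_{j-1}}$ shows that $\widehat{H_i}$ is supported in signed sums $\sum_{j\leq i}\pm k_j^{\rho_j}$ of modulus $O(N_i)\ll N_{i+1}$. As a consequence, the convolution is concentrated at frequencies of modulus $\gtrsim N_{i+1}$, apart from an exponentially-bounded number of near-cancellation frequencies produced by coincidences among the sign and root choices. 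Weighting by $(1+|k|^2)^{-(d+d')}$ defeats this combinatorial growth thanks to the super-exponential bound $N_{i+1}=N_i^{1+\sigma}$, giving summability $\sum_i \|H_{i+1}-H_i\|_{H^{-d-d'}}<\infty$. The main obstacle is precisely this bookkeeping of near-cancellation frequencies; the Sobolev index $s=d+d'$ is dictated by the need to overcome the $2^{O(i)}$ combinatorics against the dimension counts of $\Td\times G$.
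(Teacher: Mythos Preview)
Your overall plan---pass to K.A.M.\ normal form via Lemma~\ref{lemKAM normal form} and read off the three conclusions from Theorem~\ref{thm almost red}---is exactly the paper's, and your handling of $\L_i^{*}A_i(\cdot)\to\Id$ and of the Cauchy property of $(\L_i)$ (hence $\L_i\to A_\infty\in\TT$) agrees with the paper: the latter comes from $d(\L_i^{*}A_i,\Id)<|k_i|^{-\nu}$ together with $d(A_i^{*}A_{i+1},\Id)=O(|k_i|^{-\infty})$.

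Where you diverge is in the proof of the $H^{-d-d'}$-convergence. You keep the raw K.A.M.\ products $H_i=B_i\cdots B_1$ and estimate the Cauchy differences $(B_{i+1}-\Id)H_i$ by tracking the Fourier support of $\widehat{H_i}$ through the iterated convolution, which forces you into the ``near-cancellation'' combinatorics you flag as the main obstacle. The paper instead replaces each factor $B_i$ by the modified conjugation
\[
G'_i(\cdot)=B_i^{*}(\cdot)\,\exp(D_i)\,B_i(\cdot),
\]
which still conjugates between constants in $\TT$ and still has Fourier support only at the resonant modes $k_i^{\rho}$, but now satisfies the uniform estimate $d_s(G'_i,\Id)\leq C_s|k_i|^{s}$. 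Summability of $G'_i-\Id$ in $H^{-d-d'}$ is then immediate from the growth of $|k_i|$, with no need to analyse the convolution history of the full product. Your approach can in principle be completed (the essential point being that $\widehat{H_i}$ is supported at frequencies $O(N_i)$ while the non-zero modes of $B_{i+1}$ sit above $N_i$, and everything is uniformly $L^2$-bounded by compactness of $G$), but the paper's passage from $B_i$ to $G'_i$ is precisely the device that eliminates the bookkeeping you anticipate.
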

This is the form of theorem
\ref{thm almost red} that we will use in the proof of theorem \ref{thmxhypoell}.
\begin{proof}
In the notation of \cite{NKContSpec}, \S 3.4.3, or theorem \ref{thm almost red},
assume that the cocycle is in K.A.M. normal
form. Then, the conjugations
\begeq
G'_{i}(\. ) = B_{i}^{*}(\. ) \exp (D_{i}) B_{i}(\. )
\endeq
with notation as in item 3, \S 5.1 op.cit.\footnote{There is a typo in the reference, the correct index
for the matrix $D_{i}$ is $i$ and not $i+1$.} and eq. \eqref{eqfar from id conj},
satisfy
\begeq
d_{s}(G'_{i}(\. ) , \Id ) \leq C_{s} |k_{i}|^{s} ,
k_{i} = (k_{i}^{\r })_{\r \in \D} \in \Z ^{\# \D}
\endeq
and they have only the resonant Fourier coefficients, precisely $k_{i}^{\r }$. This
sequence is summable in, say, $H^{-d-d'}$. This allows us to conclude, since the
conjugations
\begin{equation} \label{eqseq conj}
H_{i} (\. ) = \prod _{i=\infty}^{1}G'_{i}(\. )
\end{equation}
conjugate the cocycle $(\a , A(\. ))$ to\footnote{Notice the abuse of notation. We also
call the new sequence of cocycles $(\a , A_{i}(\. ))$.}
\begeq
\Conj _{H_{i}(\. )}\Phi = (\a , A_{i}e^{F_{i}(\. )}) =
(\a , A_{i} + O(|k_{i}|^{-\infty}))
\endeq
with
\begeq
d(\L _{i}^{*}A_{i}, \Id )
\leq |k_{i}|^{-\nu } \text{ and } \L_{i}, A_{i} \in \TT , \forall i, 1\leq i < M
\endeq
and
\begeq
d(A _{i}^{*}A_{i+1}, \Id )
%=
%\sqrt{\epsilon _{i}^{2} +|\hat{F}_{i}(k_{i})|^{2}}
= O( |k_{i}|^{-\infty })
\endeq
This ends the proof.
\end{proof}

\begin{lemma} \label{lemconj const}
Let $(\a ,A_{i})$, $j=1,2$ be two constant cocycles, $A_{i} \in \TT$, that are conjugate in
$\DD '$. Then, they are $\sm$ conjugate via a torus morphism $B(\. ): \Td \ra \TT$.
Moreover,
\begeq
A _{1}^{*}. A _{2} \in \exp ((\a \. \Zd)\ZZ )
\text{ and }
B( \Z ^{d} ) \subset (\ZZ )
\endeq
\end{lemma}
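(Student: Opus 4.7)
The plan is to first use Fourier analysis to extract smoothness and rigid structure from the distributional conjugation $H$, then read off an arithmetic constraint on $(A_1, A_2)$, and finally convert that constraint into an explicit torus morphism.

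Working in a faithful unitary representation of $G$ in which $\TT$ is the group of diagonal matrices, I would write $A_j = \mathrm{diag}(e^{2i\pi \theta_j^\ell})$ and decouple the distributional equation $A_1 H(\.) = H(\. + \a) A_2$ entry-wise into
\begin{equation*}
H_{\ell\ell'}(x+\a) \;=\; e^{2i\pi(\theta_1^\ell - \theta_2^{\ell'})} H_{\ell\ell'}(x) \qquad \text{in } \DD'(\Td).
\end{equation*}
Each entry is then a distributional eigenfunction of the irrational translation $T_\a$; since every such eigenfunction is a single trigonometric monomial, either $H_{\ell\ell'} \equiv 0$ or $H_{\ell\ell'}(x) = c_{\ell\ell'} e^{2i\pi k_{\ell\ell'} \. x}$ with the resonance $\theta_1^\ell - \theta_2^{\ell'} \equiv k_{\ell\ell'} \. \a \pmod{\Z}$. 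Consequently $H$ is automatically smooth, with every matrix entry supported at a single Fourier frequency.

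Iterating the conjugation equation gives $H(n\a) = A_1^n H(0) A_2^{-n}$ for all $n \in \Z$, and density of $\{n\a\}$ in $\Td$ combined with continuity confines the image of $H$ to $T_1 \. H(0) \. T_2$, where $T_j := \overline{\langle A_j \rangle} \subseteq \TT$. The Fourier description from the previous step produces a smooth factorisation $H(x) = B_1(x)\, H(0)\, B_2(x)$ in which each $B_j : \Td \to T_j$ is a torus morphism with linear lift $\tilde L_j : \R^d \to \ft$ sending $\Zd$ into $\ZZ$ and satisfying $B_j(x + \a) = B_j(\a) B_j(x)$. Substituting back into the conjugation equation and evaluating at $x = 0$ yields the algebraic identity
\begin{equation*}
A_1 B_1(\a)^{-1} \;=\; H(0) \bigl( B_2(\a) A_2 \bigr) H(0)^{-1},
\end{equation*}
whose two sides both lie in $\TT$. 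By the standard fact that $G$-conjugate torus elements are Weyl-conjugate and the invariance of $\ZZ$ under the Weyl group, taking logarithms in $\ft$ gives $a_1 - a_2 \equiv \tilde L_1(\a) + \tilde L_2(\a)$ modulo the exponential kernel lattice, and hence $A_1^* A_2 \in \exp\bigl((\a \. \Zd) \ZZ\bigr)$.

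Granted this arithmetic relation, I would pick vectors $v_1, \dots, v_d \in \ZZ$ with $\sum_j \a_j v_j \equiv a_1 - a_2$ modulo the kernel of $\exp\!\mid_\ft$, define $L : \R^d \to \ft$ as the $\R$-linear extension of $L(e_j) := v_j$, and set $B(x) := \exp(L(x))$. By construction $B$ is a torus morphism with $B(\Zd) \subset \ZZ$ in the lifted sense, and $B(\a) A_2 = \exp(a_1 - a_2) A_2 = A_1$, so $B$ $\sm$-conjugates $(\a, A_2)$ to $(\a, A_1)$. The main technical delicacy is the smooth factorisation $H = B_1 \. H(0) \. B_2$ when the stabilizer $T_1 \cap H(0) T_2 H(0)^{-1}$ is nontrivial, and the control of the potential Weyl element arising in the identity of the third paragraph; both are resolved by working directly with the canonical monomial Fourier expansion of $H$ from the first step, which pins the factorisation down and forces the Weyl element to act trivially on the exponential coordinates.
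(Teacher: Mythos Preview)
Your approach is exactly the one the paper invokes: its proof is a one-line reference to \S 5.1 of \cite{NKRigidity}, with the sole remark that the argument there carries over because the Fourier transform is defined on $\DD'$ --- which is precisely your first paragraph (entry-wise eigenfunction equations for the Diophantine translation force each matrix coefficient of $H$ to be a single trigonometric monomial, hence $H$ is automatically smooth). The remaining algebra you sketch is what the cited section contains.

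One caveat worth tightening: you freely use $H(0)^{-1}$ and the factorisation $H = B_{1}\,H(0)\,B_{2}$, which presupposes that the distributional limit $H$ is nonzero and $G$-valued. Convergence of unitary-valued maps in a negative Sobolev norm does not by itself rule out $H \equiv 0$ (e.g.\ $e^{2i\pi n_{j}\cdot x}\to 0$ in $H^{-1}$). In \cite{NKRigidity} this is a non-issue because the conjugation is measurable and $G$-valued a.e.; here you should either extract a surviving nonzero Fourier coefficient directly from the unitarity of the approximants $H_{i}$, or note that in the paper the lemma is only ever applied to the explicit K.A.M.\ sequence of eq.~\eqref{eqseq conj}, whose limit is nontrivial by construction.
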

\begin{proof}
It is \S 5.1 of \cite{NKRigidity} where the measurability assumption on the
conjugation is unnecessary, as the Fourier transform is well defined for
distributions.
\end{proof}

\begin{lemma} \label{lemconv conj}
Let $H_{i}' (\. ) \in \sm (\Td ,G)$ converge in some $H^{s}$, and $\L _{i}' \in \TT $ be
a sequence of constants, both otherwise satisfying the same conclusions as
in lemma
\ref{lemdef rot vec} with $ A _{\infty}'$ replacing $ A _{\infty}$. Then
\begeq
A _{\infty}^{*}. A _{\infty}' \in \exp ((\a \. \Zd)\ZZ )
\endeq
and there exists a $\sm$ smooth conjugation $D(\. ) $ such that
\begeq
H_{i} (\.) = D(\. ) .H_{i}'(\. )
\endeq
In particular, $H_{i}' (\. )$ also converges in $H^{-d-d'}$.
\end{lemma}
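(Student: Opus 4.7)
The plan is to consider the sequence of smooth conjugations $D_i(\.) := H_i(\.) H_i'(\.)^{-1} \in \sm(\Td, G)$. Combining the two conjugation identities supplied by Lemma \ref{lemdef rot vec} yields
\begin{equation*}
D_i(\. + \a) \, A_i'(\.) \;=\; A_i(\.) \, D_i(\.) ,
\end{equation*}
where by Lemma \ref{lemdef rot vec} we have $A_i(\.) \overset{\sm}{\ra} A_\infty$ and $A_i'(\.) \overset{\sm}{\ra} A_\infty'$, both in $\TT$.

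Since $G$ is compact, the sequence $D_i$ is uniformly bounded in $L^\infty$, hence admits a weakly-$*$ convergent subsequence with some limit $D \in L^\infty \subset \DD'$. The $\sm$, a fortiori $L^\infty$, convergence of $A_i, A_i'$ combined with the weak-$*$ convergence of $D_i$ allows one to pass to the limit on both sides of the identity above, producing
\begin{equation*}
D(\. + \a) \, A_\infty' \;=\; A_\infty \, D(\.) \quad \text{in } \DD'.
\end{equation*}
This is precisely the statement that $D$ is a $\DD'$-conjugation between the constant cocycles $(\a, A_\infty')$ and $(\a, A_\infty)$ in $\TT$. Lemma \ref{lemconj const} then applies directly and delivers both the arithmetic relation $A_\infty^{*} A_\infty' \in \exp((\a \. \Zd)\ZZ)$ and the fact that $D$ is a smooth torus morphism with $D(\Zd) \subset \ZZ$.

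It remains to transfer the $H^{-d-d'}$ regularity from $H_i$ to $H_i'$. The exact identity $H_i'(\.) = D_i(\.)^{-1} H_i(\.)$, together with an upgrade of the convergence $D_i \to D$ to the $\sm$ topology, implies that $H_i'$ is asymptotically $D^{-1}(\.) H_i(\.)$, and since multiplication by a fixed smooth $G$-valued map preserves every Sobolev class on $\Td$, the convergence $H_i \to H_\infty$ in $H^{-d-d'}$ transfers to $H_i'$. The main technical point of the proof is precisely this upgrade of weak-$*$ to $\sm$ convergence of $D_i$. One approach is to write $\epsilon_i := D_i - D$, subtract the limiting equation from the one satisfied by $D_i$ to derive a cohomological equation
\begin{equation*}
\epsilon_i(\. + \a) \, A_\infty' - A_\infty \, \epsilon_i(\.) \;=\; R_i(\.)
\end{equation*}
whose right-hand side $R_i$ tends to zero in $\sm$ by the $\sm$ convergence of $A_i, A_i'$; the finite, discrete Fourier support of the torus morphism $D$ and standard small-divisor estimates for the cohomological operator over $(\a, A_\infty')$ then invert this equation with controlled Sobolev loss, giving $\epsilon_i \to 0$ in $\sm$.
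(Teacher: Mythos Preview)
Your approach is genuinely different from the paper's, which does not pass through weak limits or cohomological equations at all. The paper reduces, without loss of generality, to the case where both $H_{i}$ and $H_{i}'$ arise from K.A.M. normal forms, and then invokes the structural classification of admissible conjugations between such normal forms from \S 5.1 of \cite{NKContSpec}: convergence of $H_{i}(H_{i}')^{-1}$ in $\DD'$ forces all but finitely many of the far-from-identity torus morphisms $B_{i}'(\. )$ of eq.~\eqref{eqfar from id conj} to be trivial. The finite product of the surviving torus morphisms is the smooth $D(\. )$, and the arithmetic relation on $A_{\infty}^{*}A_{\infty}'$ follows because only the $B_{i}'$ move the limiting constant.

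Your argument, as it stands, has two genuine gaps.

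\textbf{The weak-$*$ limit can be degenerate.} Nothing prevents the weak-$*$ limit $D$ of $D_{i}$ in $L^{\infty}$ from being zero, or at least from failing to be $G$-valued. Take $G=U(1)$ and $D_{i}(x)=e^{2i\pi k_{i}x}$ with $|k_{i}|\to\infty$: this is exactly the behaviour of $H_{i}(H_{i}')^{-1}$ when infinitely many torus morphisms $B_{i}'$ are present, and then $D_{i}\to 0$ weak-$*$. The linear equation $D(\.+\a)A_{\infty}'=A_{\infty}D(\.)$ is satisfied trivially by $D=0$, and Lemma~\ref{lemconj const} (whose proof needs a nonvanishing Fourier coefficient in each root direction) yields nothing. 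Excluding this degeneration is precisely what the structural input from \cite{NKContSpec} provides, and you have not replaced it by anything.

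\textbf{The $\sm$ upgrade does not go through.} The right-hand side $R_{i}$ of your cohomological equation is, after subtraction,
\begeq
R_{i}(\.) = (A_{i}(\.)-A_{\infty})D_{i}(\.) - D_{i}(\.+\a)(A_{i}'(\.)-A_{\infty}') ,
\endeq
and since $D_{i}$ has no a priori bound in any $C^{k}$ with $k\geq 1$ (its derivatives grow polynomially along the sequence by item~\ref{item pol growth B} of Theorem~\ref{thm almost red}), this tends to zero only in $L^{\infty}$, not in $\sm$. Independently, inverting the operator $v\mapsto v(\.+\a)A_{\infty}'-A_{\infty}v(\.)$ with a fixed finite loss of derivatives requires $A_{\infty}$ (or $A_{\infty}'$) to be Diophantine with respect to $\a$; this is precisely \emph{not} assumed in the lemma, whose whole purpose is to cover arbitrary, possibly Liouvillean or resonant, limiting constants.
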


\begin{proof}
The proof follows directly from \S 5.1 of \cite{NKContSpec}. Assume without loss of
generality that both sequences of conjugations $H_{i}(\. )$ and $H_{i}'(\. )$
correspond to K.A.M. normal forms of the cocycle, in which case the conjugations
$H_{i}(\. )$ and $H_{i}'(\. )$ are given by eq. \eqref{eqseq conj}.
%Then, they are conjugate via some transfer function defined in a $H^{s} $ for some $s \in \R$.

If a product of admissible conjugations between K.A.M. normal forms converges in $\DD ' $,
then it contains a finite number of conjugations of the type denoted by
$B_{i}'(\. )$ as in eq. \eqref{eqfar from id conj}, for, otherwise, the product diverges in
$\DD '$. Products of all other admissible conjugations converge in $H^{-d-d'}$.

Since all other admissible conjugations do not modify the limit of the sequences
$\L_{i} \ra A_{\infty}$ and $\L _{i}' \ra A_{\infty}'$,
and the conjugation $D(\. )$ is given by the product of finitely many torus morphisms
$\Td \ra \TT$ and conjugations converging in $H^{-d-d'}$, the proof is complete.
\end{proof}

\begin{lemma} \label{lemdiv conj}
Let $H_{i} (\. )\in \sm (\Td ,G)$, $\L _{i} \in \TT $ and $A_{\infty}$ be as in lemma
\ref{lemdef rot vec}.
If $H_{i}' (\. )\in \sm (\Td ,G)$ and $\L _{i}' \in \TT $ with
$\L _{i}'  \ra A_{\infty}'$ satisfy definition \ref{defar}, but
\begin{equation} \label{eqdif notin lattice}
A _{\infty}^{*}. A _{\infty}' \notin \exp ((\a \. \Zd)\ZZ )
\end{equation}
then $H_{i}' (\. )$ does not converge in $\DD '$.
\end{lemma}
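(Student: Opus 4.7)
The plan is to establish the contrapositive: if $H'_{i}(\cdot)$ does converge in $\DD' = \cup_{s \in \R} H^{s}$, then $A_{\infty}^{*}.A_{\infty}' \in \exp((\a \. \Zd)\ZZ)$, in direct contradiction with \eqref{eqdif notin lattice}. The whole lemma thus reduces to applying Lemma \ref{lemconv conj} once the convergence of $H'_{i}$ in some fixed Sobolev space has been unfolded into the structured form required by that lemma.

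First I would use Lemma \ref{lemKAM normal form} to reduce, via a preliminary $\sm$ conjugation, to the case where $\Phi$ is already in K.A.M. normal form; after this reduction the canonical sequence $H_{i}(\cdot)$ from Lemma \ref{lemdef rot vec} is the explicit product \eqref{eqseq conj}, built from the torus-morphism factors $B_{k}(\cdot)$ of \eqref{eqfar from id conj} and from the close-to-identity correctors of item \ref{item exp dec Y} of Theorem \ref{thm almost red}. Next, I would apply Theorem \ref{thm almost red} also to the data $(H'_{i}, \L'_{i})$ supplied by Definition \ref{defar} in order to expose its own K.A.M. structure, and compare term by term with $H_{i}$: the auxiliary sequence $G_{i}(\cdot) := H'_{i}(\cdot).H_{i}^{-1}(\cdot)$ is then, at each step, a product of factors of the same three admissible types. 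Under the assumed convergence of $H'_{i}$ in some $H^{s}$, and using the convergence of $H_{i}$ in $H^{-d-d'}$ from Lemma \ref{lemdef rot vec}, the sequence $G_{i}$ can be shown to converge in $\DD'$, placing $(H'_{i},\L'_{i})$ inside the hypotheses of Lemma \ref{lemconv conj}; that lemma then yields $A_{\infty}^{*}.A_{\infty}' \in \exp((\a \. \Zd)\ZZ)$, contradicting the assumption.

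The main obstacle is that neither $H_{i}$ nor $H'_{i}$ lives in a classical function space, so the pointwise product $H'_{i}.H_{i}^{-1}$ is not a priori meaningful in $\DD'$. As in the proof of Lemma \ref{lemconv conj} (and as developed in \S 5.1 of \cite{NKContSpec}), this has to be treated on the Fourier side at the level of the K.A.M. normal form: each far-from-identity factor $B_{k}(\cdot)$ of the form \eqref{eqfar from id conj} contributes a packet of Fourier mass on the resonant frequencies $k^{\r}_{k}$, with norm $\simeq N_{k}^{s+\l}$ in $H^{s}$ by item \ref{item pol growth B} of Theorem \ref{thm almost red}; these packets sit on distinct non-zero frequencies, so any product which is summable in some $\DD'$ can contain only finitely many of them. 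The remaining factors are either torus morphisms (smooth) or close-to-identity correctors whose tails are summable in $H^{-d-d'}$, and multiplying a finite smooth product with such a Sobolev-convergent tail preserves convergence. This is precisely the mechanism of Lemma \ref{lemconv conj}, and once it is applied to $G_{i}$ the contradiction with \eqref{eqdif notin lattice} follows at once.
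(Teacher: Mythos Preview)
Your proposal is essentially correct and follows the same mechanism as the paper, but it takes an unnecessary detour that creates a spurious difficulty.

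The paper's proof is simply the contrapositive you describe, stated directly: since (by the analysis in the proof of Lemma~\ref{lemconv conj}) the only admissible conjugations capable of altering the limit $A_{\infty}$ modulo $\exp((\a\.\Zd)\ZZ)$ are the torus-morphism factors $B'_{i}(\.)$ of~\eqref{eqfar from id conj}, the hypothesis~\eqref{eqdif notin lattice} forces infinitely many such factors to occur. Each contributes Fourier mass on its own resonant frequencies with Sobolev norm $\simeq N_{i}^{s+\l}$ (item~\ref{item pol growth B} of Theorem~\ref{thm almost red}), so the product diverges in every $H^{s}$, hence in $\DD'$. This is exactly what your third paragraph says.

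Where you go astray is in your second paragraph: forming $G_{i}=H'_{i}\.H_{i}^{-1}$ is superfluous. Once you assume $H'_{i}$ converges in some $H^{s}$, you are \emph{already} in the hypotheses of Lemma~\ref{lemconv conj} (after the routine reduction to K.A.M. normal form), and that lemma gives the lattice condition directly---no product with $H_{i}^{-1}$ is needed. The ``main obstacle'' you flag (multiplying two distributional limits) is thus a problem of your own making; the argument never requires it. Drop the $G_{i}$ construction and your proof collapses to the paper's two-line version.
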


\begin{proof}
Follows directly from the proof of lemma \ref{lemconv conj}, since for eq.
\eqref{eqdif notin lattice} to hold, infinitely many conjugations of the type
$B_{i}'(\. )$ must occur, in which case the product does not converge in any distribution
space, and such conjugations have optimal Sobolev norms.
\end{proof}

\begin{lemma} \label{lemdiv csts}
If $H_{i}' (\. ) \in \sm (\Td ,G)$ and $\L _{i}' \in \TT $  satisfy definition \ref{defar},
except that $\L_{i}'$ does not converge, then $H_{i}'(\. )$ does not converge in $\DD '$.
\end{lemma}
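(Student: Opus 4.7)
The plan is to adapt the strategy of lemma \ref{lemdiv conj}, now driven by non-convergence of $\L_i'$ rather than by a mismatch of rotation vectors modulo the lattice $\exp((\alpha \cdot \Zd)\ZZ)$. By lemma \ref{lemKAM normal form} we may assume without loss of generality that $\Phi$ is in K.A.M. normal form, so that any admissible sequence $H_i'(\cdot)$ decomposes as in eq.\ \eqref{eqseq conj} into a product of close-to-identity factors (summable in every Sobolev space) and resonant factors $B_j'(\cdot)$ of the form \eqref{eqfar from id conj}, whose Sobolev norms satisfy the optimal lower bound $\|B_j'(\cdot)\|_s \simeq C_s |k_j|^{s+\lambda}$ for $s \geq 0$ (item \ref{item pol growth B} of theorem \ref{thm almost red}).

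The main new step, and the core obstacle, is to relate the non-convergence of $\L_i'$ to the occurrence of infinitely many such resonant factors in the product. Along close-to-identity steps the constants are modified only by exponentially small perturbations, so non-convergence of $\L_i' \in \TT$ forces infinitely many resonant factors $B_j'(\cdot)$ to appear in the decomposition of $H_i'(\cdot)$; moreover the resonance indices $|k_j|$ must tend to infinity, since any bounded collection of such factors would produce only finitely many distinct values of $\L_i'$.

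Once this is established, the conclusion follows by the same mechanism as in the proof of lemma \ref{lemdiv conj}: the product of infinitely many factors $B_j'(\cdot)$ with Sobolev norms blowing up polynomially in $|k_j|$ cannot converge in any Sobolev space relevant to the construction of the rotation vector, in particular in $H^{-d-d'}$. As a complementary check, one may pass to the limit along two subsequences of $\L_i'$ with distinct accumulation points $A_\infty', A_\infty''$ in the conjugation equation $H_i'(\cdot + \alpha) A(\cdot) = A_i'(\cdot) H_i'(\cdot)$ (using $A_i'(\cdot) = \L_i' e^{F_i'(\cdot)}$ with $F_i' \to 0$ in $\sm$), which yields $((A_\infty')^* A_\infty'' - \Id) H'(\cdot) = 0$ as a matrix-valued distribution on $\Td$; the K.A.M. structural argument above then rules out the degenerate solution $H' = 0$. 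In either formulation, the assumption that $H_i'(\cdot)$ converges in $\DD' = \cup_{s \in \R} H^s$ is contradicted, completing the proof.
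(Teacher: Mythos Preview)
Your proposal is correct and follows essentially the same approach as the paper, whose proof is literally the single sentence ``the same line of argument as in the proof of lemma \ref{lemdiv conj}''; you have simply unpacked that line by reducing to K.A.M. normal form, observing that non-resonant steps perturb the constants $\L_i'$ only superexponentially, and concluding that non-convergence of $\L_i'$ forces infinitely many resonant factors $B_j'(\cdot)$ and hence divergence of the product in $\DD'$. Your complementary subsequence argument is extra and, as you yourself note, not self-contained (the limit $H'$ could a priori be the zero distribution, which must be excluded by the structural K.A.M. argument anyway), so it adds nothing beyond the main line.
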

\begin{proof}
The same line of argument as in the proof of lemma \ref{lemdiv conj}.
\end{proof}

\section{Existence and well-posedness of the rotation vector}

In this section, we provide the proofs of theorem \ref{thmxrot vec}, propositions
\ref{propxrot vec transl} and \ref{propxrot vec inv}, and theorem \ref{thmxred in neg reg}.
Taken altogether, they establish that the fibered rotation vector of an almost reducible
cocycle, cf. definition \ref{defrot vec}, is a well-defined object and a total invariant
of the dynamics, albeit in low regularity.

The well-posedness of the definition amounts to proving a weak form of rigidity for
the conjugations produced by the K.A.M. scheme, and to providing a classification of
the possible limits of the sequences of constants under this weak rigidity assumption.

\begin{proof}[Proof of theorem \ref{thmxrot vec}]
Lemma \ref{lemdef rot vec} proves the theorem, which in turn implies that definition
\ref{defrot vec} is not void.
\end{proof}

\begin{proof}[Proof of proposition \ref{propxrot vec transl}]
The proof follows by combining lemmas \ref{lemconv conj}, \ref{lemdiv conj}  and
\ref{lemdiv csts}
\end{proof}

\begin{proof}[Proof of proposition \ref{propxrot vec inv}]
The proof follows by combining lemmas \ref{lemconj const}, \ref{lemconv conj} and
\ref{lemdiv conj}.

The continuous $C^{0}$ dependence of representatives of $\varrho$ follows from the
K.A.M. normal form. Fix two almost reducible cocycles $\Phi ^{(j)}$ that are $C^{0}$ close,
and assume without loss of generality that $\Phi ^{(1)}$ is given in K.A.M. normal form.
Denote by $\varrho _{j}$ their respective rotation vectors.

The K.A.M. normal form of $\Phi ^{(1)}$ and the corresponding sequence of conjugations
as in eq. \eqref{eqseq conj} fixes a representative $a_{\infty}^{(1)}$ of
$\varrho _{1}$. Then, by following the K.A.M. scheme applied to $\Phi  ^{(2)}$, and by
constructing the corresponding sequence of conjugations as in eq. \eqref{eqseq conj} one
can obtain a representative $a_{\infty}^{(2)}$ of $\varrho _{2}$ whose distance to
$\varrho _{1}$ is bounded by the $C^{0} $ distance of the cocycles.
\end{proof}

\begin{proof} [Proof of theorem \ref{thmxred in neg reg}]
It follows directly from lemma \ref{lemdef rot vec}.
\end{proof}

\section{Proof of theorem \ref{thmxhypoell}}

The proof of theorem 1.1 of \cite{NKRigidity}, applies verbatim. This is so, because the
only function of the measurable conjugation is to assure that the K.A.M. scheme produces a
finite number of resonances. This happens when $ a _{\infty}$ ($a _{\infty}$ corresponds
to $a_{d}$ of the reference) is polynomially away from resonances.

%In our context, $a _{d}$ corresponds is any of the roots of the rotation vector $\varrho$.
%Estimation of the above quantity amounts to bounding the parameter $\epsilon _{i} \in \R$
%of the K.A.M. normal form (cf. \cite{NKContSpec}) by
%\begeq
%|\epsilon _{i}| \geq |k_{i}|^{-\nu '}
%\endeq
%but for a finite number of steps $i$, for some $\nu ' \geq \nu$ (see also \cite{NKInvDist},
%lemma 8.1 for more details on this argument). The parameter $\epsilon _{i} $ is the
%distance of $A_{i}$ to $\L _{i}$, i.e. the distance of $A_{i}$ to its closest resonance
%of size at most $N_{n_{i}}$. This is sufficient when $G=SU(2) $ where the maximal tori
%are of dimension $1$. For a general group, the direct analogue of $\epsilon _{i}$ is
%$d(\L _{i}^{*}A_{\infty}, \Id )$, and the proof of lemma \ref{lemdef bar nu} is the same
%as its equivalent in \cite{NKInvDist}.
%
%The bound
%\begeq
%|d(\L _{i}^{*}A_{\infty}, \Id )| \geq |k_{i}|^{-\nu '}
%\endeq
%is automatically satisfied as soon as $\varrho$ is Diophantine with
%respect to $\a$, and the proof is as follows.

\begin{proof}[Proof of theorem \ref{thmxhypoell}]
Fix $a_{\infty}$, the representative of $\varrho (\Phi)$ obtained as the limit of cocycles
constructed in the proof of lemma \ref{lemdef rot vec} assuming that $M$, the number of
resonant steps, is infinite. Then, by assumption, $A_{\infty} = \exp (a_{\infty})$
satisfies
\begeq
d(\L _{i}^{*}.A_{\infty}, \Id ) \gtrsim |k_{i}|^{-\tilde{\t }}
\endeq
We also have by construction that
\begeq
d(\L _{i}^{*}.A_{i}, \Id ) < |k_{i}|^{-\nu }
%\text{ and }
%d(A _{i+1}^{*}.A_{i}, \Id ) \lesssim |k_{i}|^{-\nu }
\endeq
Assume, now, without loss of generality (cf. \cite{NKRigidity} for the same argument)
that $\nu > \tilde{\t}$. Then, we immediately get
\begeq
d(A _{i}^{*}.A_{\infty}, \Id ) \gtrsim |k_{i}|^{-\tilde{\t }}
%\text{ and }
%d(A _{i+1}^{*}.A_{i}, \Id ) \lesssim |k_{i}|^{-\nu }
\endeq
On the other hand, by construction
\begeq
d(A _{i+1}^{*}.A_{i}, \Id ) = O(|k_{i}|^{-\infty })
\endeq
which implies that
\begeq
d(A _{i}^{*}.A_{\infty}, \Id ) = O(|k_{i}|^{-\infty })
\endeq
This is a contradiction, which forces $M$ to be finite so that $\Phi$ is $\sm$ reducible.
\end{proof}

As we have already remarked, the Diophantine condition on the fibered rotation vector
$\varrho$ is optimal for the hypoellipticity of the conjugation operator.

\section{Some comments}

Our proofs of the results needed in order to justify the definition of the rotation
vector $\varrho$ rely heavily on the K.A.M. normal form, which is available only for
almost reducible cocycles over Diophantine rotations. We expect, however, the rotation
vector to be defined also for almost reducible cocycles over Liouvillean rotations.

The original definition of a rotation number was given by H. Poincar\'{e} for
homeomorphisms of $\T ^{1}$ (see, e.g. \cite{KatHass}), who also proved that a circle
diffeomorphism with an irrational rotation number is semi-conjugate to the corresponding
rotation. Subsequently, A. Denjoy proved that a twice differentiable diffeomorphism is
actually conjugate to the rotation. The relevance of the arithmetics of the rotation
number in questions of smoothness of the conjugation was showed by V.I. Arnol'd in
\cite{Arnold61}, where he proved that real analytic perturbations of a Diophantine
rotation $\a \in \T$ whose rotation number is $\a$ are analytically conjugate to the
rotation. The theory was pushed further by M. Herman and J.-C. Yoccoz, who established
that the rotation number is a total invariant for smooth diffeomorphisms of the circle
(cf. \cite{YocAst}).

Diffeomorphisms of tori of higher dimension can be assigned rotation sets rather than
vectors, see \cite{MisZiem91} for the definition. The analog of Arnol'd's theorem
for higher dimensional tori is proved in \cite{NKKAMTor}.

In the class of fibered dynamics, fibered rotation numbers can be defined for
quasi-periodically forced circle maps. These are diffeomorphisms of $\T ^{2}$ that are
homotopic to the $\Id $ and of the form
\begeq
(x , y ) \mapsto (x+\a , g(x,y))
\endeq
where for each $x \in \T$ the mapping $y \mapsto g(x,y)$ is a diffeomorphism of $\T ^{1}$
In this case, the arithmetics of the rotation number in the $y$ component, the fibered
rotation number, play an important role in the regularity of the reducing
conjugation (see, e.g. \cite{JagStark06} for a discussion).

For the same reason as for quasi-periodically forced circle maps, quasi-periodic cocycles
in $\Td \times SL(2,\R )$ that are homotopic to the $\Id$ can be assigned a fibered
rotation number by projectivization in the $SL(2, \R )$ direction. The importance of the
aritmetics of the fibered rotation number with respect to the rotation in the basis are
exhibited by results like the one obtained by H. Eliasson in \cite{El1992}.

For all of the above systems, the rotation number or set, fibered or not, is defined
by dynamics alone. However, in the context of this note we do not dispose of combinatorial
arguments, or, at least a priori, of any torus where the dynamics naturally lives.
%Almost reducibility itself is not a dynamic condition. In the context of real analytic
%cocycles in $\T \times SL(2,\C)$, the theory developed in \cite{Av09a} establishes that
%such almost reducible cocycles are exactly those whose Lyapunov exponents vanish in an
%open strip containing $\T$. It seems possible that this holds true for asymptotically
%holomorphic extensions of $\sm$ cocycles, in which case a dynamical definition of the
%rotation vector could be obtained.

The rotation vector can nonetheless be defined, because we manage to conjugate any given
cocycle close to a converging sequence of constant cocycles, all belonging to the same
torus, and without losing too much control of the sequence of conjugations.
We think that this should also be the case when the rotation in the basis is Liouvillean,
in which case a rotation vector would be defined for an open set of cocycles over all
minimal rotations.

\bibliography{nikosbib}
\bibliographystyle{aomalpha}

\end{document}